\def \tilde{\widetilde}
\newcommand{\st}[1]{\ensuremath{^{\scriptstyle \textrm{#1}}}}
\newtheorem{theorem}{Theorem}
\newtheorem*{theorem*}{Theorem}
\newtheorem{lemma}{Lemma}
\newtheorem{proposition}{Proposition}
\newtheorem*{proposition*}{Proposition}
\newtheorem{corollary}{Corollary}
\theoremstyle{definition}
\newtheorem*{definition*}{Definition}
\newtheorem*{comments*}{Comments}
\newtheorem*{example*}{Example}
\theoremstyle{remark}
\newtheorem{remark}{Remark}
\newtheorem*{remarks*}{Remarks}
\newcommand{\alphaparenlist}{
  \renewcommand{\theenumi}{\alph{enumi}}%
  \renewcommand{\labelenumi}{(\theenumi)}%
}
\newcommand{\romanparenlistii}{
  \renewcommand{\theenumii}{\roman{enumii}}%
  \renewcommand{\labelenumii}{(\theenumii)}%
}
\def\@maketitle{\newpage
 \null
 \vskip 2em
 \begin{center}%
  {\Large\bf \@title \par}%
  \vskip 1.5em
  {\normalsize
   \lineskip .5em
   \begin{tabular}[t]{c}\@author
   \end{tabular}\par}%
  \vskip 2em

 \end{center}%
 \par
 \vskip 2.5em}
\begin{document}






\title{Irreducible Continuous Representations of the Simple Linearly Compact n-Lie Superalgebra  of type $S$  }

\author{Carina Boyallian and Vanesa Meinardi\thanks{%
     Ciem - FAMAF, Universidad Nacional de C\'ordoba - (5000) C\'ordoba,
Argentina
\newline $<$boyallia@mate.uncor.edu - meinardi@mate.uncor.edu$>$.}}

 \maketitle

\begin{abstract}
In the present paper we classify all irreducible continuous representations of
the  simple linearly compact n-Lie superalgebra of type $S.$ The
{cla\-ssi\-fi\-ca\-tion} is based on a bijective correspondence
between  the continuous  representations of the n-Lie algebras $S^n$ and continuous
representations of the Lie
algebra of Cartan type
   $S,$    on which some two-sided ideal acts trivially.
\end{abstract}

\vfill \pagebreak

\section{Introduction}
 \noindent In 1985 Filippov \cite{F} introduced a generalization of a Lie
algebra, which he called an $n$-Lie algebra. The Lie bracket  is
taken between $n$ elements of the algebra instead of two. This new
bracket is $n$-linear, anti-symmetric and satisfies
a generalization of the Jacobi identity.\\
 In \cite{F} and several subsequent papers,
\cite{F1}, \cite{K}, \cite{K1}, \cite{L} a structure theory of finite
dimensional $n$-Lie algebras over a field $\mathbb{F}$ of
characteristic $0$ was developed. In \cite{L},  W. Ling proved that
for every $n\geq 3$ there is, up to isomorphism only one finite dimensional simple
$n$-Lie algebra, namely $\mathbb{C}^{n+1}$ where  the $n$-ary operation is
given by the generalized vector product, namely, if  $e_1,\cdots,
e_{n+1}$ is the standard basis of $\mathbb{C}^{n+1},$  the  n-ary bracket
is given by
$$[e_1,\cdots, \hat{e_i},\cdots e_{n+1}]=(-1)^{n+i-1}\, e_i,$$ where
$i$ ranges from $1$ to $n+1$ and the hat means that $e_i$ does not
appear in the bracket.

 A. Dzhumadildaev studied in \cite{D1} the
finite dimensional irreducible {re\-pre\-sen\-ta\-tions} of the simple $n$-Lie
algebra $\mathbb{C}^{n+1}$. D.Balibanu and J. van de Leur in
\cite{BL} classified both, finite and infinite-dimensional
irreducible {hi\-ghest} weight representations of this algebra.
 Another examples of  $n$-Lie algebras appeared earlier in
Nambu's generalization of Hamiltonian {dy\-na\-mics} \cite{N}. A more
recent important example of an $n$-Lie algebra structure on
$C^{\infty}(M),$ where $M$ is a finite-dimensional manifold, was given by Dzhumadildaev in \cite{D}, and it is associated to $n-1$ commuting vector fields $D_1, \cdots, D_{n-1}$ on $M.$ More precisely, it is the
space $C^{\infty}(M)$ of $C^{\infty}$-functions on
 $M,$ endowed with a n-ary bracket,
associated to $n-1$ commuting vector fields $D_1, \cdots, D_{n-1}$ on
$M$:
\begin{equation}
  \label{eq:0.3}
  [f_1 ,\ldots ,f_n] = \hbox{det} \left(
    \begin{array}{ccc}
f_1 & \ldots & f_n\\
D_1 (f_1) & \ldots & D_1 (f_n)\\
\hdotsfor{3}\\
D_{n-1} (f_1) & \ldots & D_{n-1}(f_n)
\end{array} \right) \, .
\end{equation}

 A linearly compact algebra is a topological algebra, whose underlying vector space is linearly compact, namely is a topological product of finite-dimensional vector spaces, endowed with discrete topology (and it is {a\-ssu\-med} that the algebra product is continuous in this topology).
  In $2010$, N. Cantirini and V. Kac,  (\cite{CK}),  classified simple linearly compact $n$-Lie superalgebras with $n>2$
over a field $\mathbb{F}$ of characteristic 0.  The list consists in
four examples, one of them being $n+1$-dimensional vector
product $n$-Lie algebra, and the remaining three are
infinite-dimensional $n$-Lie algebras. More precisely,
\begin{theorem} \rm{\label{cantarini-kac teo}}
  \label{th1}
\alphaparenlist
 \cite{CK} \begin{enumerate}
  \item Any simple linearly compact $n$-Lie
    algebra with $n>2$, over an algebraically closed
    field~$\mathbb{F}$ of characteristic 0, is isomorphic to one of the
following four examples:

\romanparenlistii
    \begin{enumerate}
    \item 
the $n+1$-dimensional vector product $n$-Lie algebra $\mathbb{C}^{n+1}$;

\item 
the $n$-Lie algebra, denoted by $S^n$, which is the linearly compact
vector space of formal power series $\mathbb{F} [[x_1, \ldots
,x_n]]$, endowed with the $n$-ary bracket
 \begin{equation*}
  \label{eq:0.2}
  [f_1,\ldots ,f_n] = \det \left(
 \begin{array}{ccc}
     D_1 (f_1) & \ldots & D_1 (f_n)\\
\hdotsfor{3}\\
D_{n} (f_1) & \ldots & D_{n}(f_n)
\end{array} \right) \, .
\end{equation*}where $D_i =
\frac{\partial}{\partial x_i}$;

\item 
the $n$-Lie algebra, denoted by $W^n$, which is the linearly compact
vector space of formal power series $\mathbb{F} [[x_1,\ldots
,x_{n-1}]]$, endowed with the $n$-ary bracket,
\begin{equation*}
  \label{eq:0.3}
  [f_1 ,\ldots ,f_n] = \hbox{det} \left(
    \begin{array}{ccc}
f_1 & \ldots & f_n\\
D_1 (f_1) & \ldots & D_1 (f_n)\\
\hdotsfor{3}\\
D_{n-1} (f_1) & \ldots & D_{n-1}(f_n)
\end{array} \right) \, .
\end{equation*} where $D_i =
\frac{\partial}{\partial x_i}$;

\item 
  the $n$-Lie algebra, denoted by $SW^n$, which is the  direct
  sum of $n-1$ copies of $\mathbb{F} [[x]]$, endowed with the following
  $n$-ary bracket, where $f^{\langle j \rangle}$ is an element of the
  $j$\st{th} copy and $f' = \frac{df}{dx}$:
\begin{align*}
  [f^{\langle j_1 \rangle}_1 , \ldots f^{\langle j_n \rangle}_n ] =0,
\hbox{\,\, unless \,\,} \{ j_1,\ldots ,j_n \}\supset \{ 1 ,\ldots , n-1 \},\\
  [f^{\langle 1 \rangle}_1 ,\ldots , f^{\langle k-1 \rangle}_{k-1},
    f^{\langle k \rangle}_k, f^{\langle k \rangle}_{k+1},
    f^{\langle k+1 \rangle}_{k+2}, \ldots ,
    f^{\langle n-1 \rangle}_n ]\\
    = (-1)^{k+n} (f_1 \ldots f_{k-1} (f'_kf_{k+1} - f'_{k+1}f_k)
       f_{k+2}\ldots f_n)^{\langle k \rangle}\, .
\end{align*}
    \end{enumerate}
\item 
There are no simple linearly compact $n$-Lie  superalgebras over
$\mathbb{F}$, which are not $n$-Lie algebras, if $n>2$.

\end{enumerate}

\end{theorem}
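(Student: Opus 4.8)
The plan is to reduce the classification of simple linearly compact $n$-Lie superalgebras to the known classification of simple linearly compact Lie superalgebras, by attaching to each $n$-Lie superalgebra a $\mathbb{Z}$-graded Lie superalgebra built out of its inner derivations. First, given a linearly compact $n$-Lie superalgebra $\mathfrak{g}$, I would associate to every $(n-1)$-tuple $a = (a_1, \ldots, a_{n-1})$ the operator $D_a = [a_1, \ldots, a_{n-1}, \,\cdot\,]$. The generalized Jacobi (Filippov) identity says exactly that each $D_a$ is a derivation of the $n$-ary bracket and that the commutator of two such operators is again of this form, so the span $\mathrm{Inder}(\mathfrak{g})$ of the $D_a$ is a linearly compact Lie superalgebra acting continuously on $\mathfrak{g}$; simplicity forces this action to be faithful. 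I would then package $\mathfrak{g}$ and $\mathrm{Inder}(\mathfrak{g})$ into a single transitive $\mathbb{Z}$-graded Lie superalgebra $L = \bigoplus_{j \geq -1} L_j$ with $L_{-1} \cong \mathfrak{g}$ and $L_0 \supseteq \mathrm{Inder}(\mathfrak{g})$, following the Weisfeiler--Kac filtration philosophy, so that the $n$-ary bracket is encoded in the $L_0$-module structure of $L_{-1}$ together with the graded Lie bracket.

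Next, I would study $L$ using the structure theory of transitive $\mathbb{Z}$-graded linearly compact Lie superalgebras. Simplicity of $\mathfrak{g}$ translates into transitivity and into irreducibility of the $L_0$-action on $L_{-1}$, bringing $L$ within reach of Kac's classification of simple linearly compact Lie superalgebras. The determinantal shape of the bracket in examples (ii) and (iii) is the tell-tale sign of Cartan type: fixing $n-1$ arguments in the Jacobian bracket produces a formal vector field $\sum_i c_i \partial_i$ whose coefficients are $(n-1) \times (n-1)$ minors, and a direct computation shows $\sum_i \partial_i c_i = 0$, so $\mathrm{Inder}(S^n)$ consists of divergence-free fields and is the Lie algebra of Cartan type $S$, while $\mathrm{Inder}(W^n)$ is all of $W$. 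Matching the admissible $L$ against Kac's list and keeping only those whose negative part supports a compatible $n$-ary bracket yields the infinite-dimensional families $S^n$, $W^n$, $SW^n$; the finite-dimensional possibility collapses, via Ling's uniqueness theorem \cite{L}, to the vector-product algebra $\mathbb{C}^{n+1}$.

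For part (b) I would isolate the odd part $\mathfrak{g}_{\bar{1}}$. Super-antisymmetry makes the $n$-ary bracket symmetric in each pair of odd arguments, so for $n > 2$ one can feed in repeated odd entries; combining this with the Filippov identity produces rigid sign and degeneracy constraints that force the subspace generated by $\mathfrak{g}_{\bar{1}}$ to be a proper ideal unless $\mathfrak{g}_{\bar{1}} = 0$, contradicting simplicity.

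\textbf{The main obstacle} is the reconstruction step: passing back from the abstractly classified Lie superalgebra $L$ to the $n$-ary operation and verifying that precisely the four listed brackets arise, are genuinely $n$-Lie (i.e.\ satisfy the Filippov identity), and are pairwise non-isomorphic. This is where the combinatorics of the graded components and the divergence condition must be controlled carefully, since a priori a given $L$ could carry more than one, or no, compatible $n$-ary structure; ruling out the spurious cases and pinning down the determinant formulas is the delicate part of the argument.
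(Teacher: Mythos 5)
This theorem is not proved in the paper at all: it is the classification theorem of Cantarini and Kac, quoted from \cite{CK} with the citation serving as the proof. So there is no internal argument of the paper to compare your proposal against; the only meaningful comparison is with the proof in \cite{CK} itself.

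Measured against that, your sketch identifies the right global strategy --- encode the $n$-ary structure in a $\mathbb{Z}$-graded linearly compact Lie superalgebra and invoke Kac's classification --- but the correspondence you set up is not the one that makes the argument work, and the step you yourself flag as ``the main obstacle'' is precisely the mathematical content of the theorem. Cantarini and Kac do not use a Weisfeiler-type prolongation $L=\bigoplus_{j\geq -1}L_j$ with $L_{-1}\cong\mathfrak{g}$ and $L_0\supseteq \mathrm{Inder}(\mathfrak{g})$; they prove a bijective correspondence between non-abelian $n$-Lie superalgebras and transitive $\mathbb{Z}$-graded Lie superalgebras of the very particular form $L=\bigoplus_{j=-1}^{n-1}L_j$ with $\dim L_{n-1}=1$, with $L$ generated by $L_{-1}$ and $L_{n-1}$, and with $[L_j,L_{n-1-j}]=0$ for all $j$: the $n$-ary bracket is recovered canonically from the one-dimensional top component, which removes exactly the ambiguity you worry about (a given $L$ carrying several, or no, compatible $n$-ary structures). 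Without an equally rigid correspondence your plan stalls at reconstruction: transitivity plus irreducibility of $L_0$ on $L_{-1}$ do not determine the bracket, so ``matching against Kac's list and keeping those whose negative part supports a compatible $n$-ary bracket'' is the theorem restated rather than proved. Your treatment of part (b) has the same character: the claim that repeated odd entries force the span of $\mathfrak{g}_{\bar{1}}$ to generate a proper ideal is asserted, not derived, whereas in \cite{CK} the vanishing of the odd part falls out of the same graded correspondence and the classification of the admissible gradings. In short: right philosophy, but what you have is a plan whose key lemma --- the exact bijection with a distinguished class of graded Lie superalgebras --- is missing.
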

In the present paper, we aim to classify all irreducible continuous {re\-pre\-sen\-ta\-tion} of the simple linearly compact $n$-Lie
  algebra $S^n.$
In the same way that D.Balibanu and J. van de Leur did the
classification of irreducible modules in \cite{BL}, we reduced the
problem to find irreducible continuous {re\-pre\-sen\-ta\-tions} of simple linearly
compact $n$-Lie (super) algebra $S^n$ to find irreducible
continuous representations
of its associated basic Lie algebra on
which some two-sided ideal acts trivially.
\noindent
The paper is organized as follow: In Section 2 we give the basic definitions and results related with $n$-lie algebras and state the relationship between representation of $n$- Lie  algebras and representations of its associated Lie algebra. In Section 3,  we introduce the simple linearly compact $n$- Lie algebra $S^n$,  we identify its associated Lie algebra with the Lie algebra of its inner derivations which is nothing but $S_{n}$, the Lie algebra of Cartan type $S$ and finally we relate representation of the $n$-Lie algebra $S^n$ with representations of $S_{n}$. In Section 4, we present some general results of the representation theory of $S_{n}$, prove some technical lemmas and we describe some generators of the two sided ideal that must act trivially in our representations. Finally in Section 5, we state and prove the main result of the paper.

\section{$n$-Lie algebras and $n$-Lie modules}

 We will give an introduction to $n$-Lie algebras and
$n$-Lie modules. We will also introduce some useful results over the
correspondence between representations of $n$-Lie algebra and
representations of its basic associated Lie algebra.

From now on, $\mathbb{F}$ is a field of characteristic zero. As mentioned before, we are interested in studying irreducible representations
of the linearly compact $n$-Lie superalgebra $S^n.$ N. Cantarini and
V. Kac stated in
\cite{CK} that there are no simple linearly compact $n$-Lie
superalgebras over $\mathbb{F},$ which are not $n$-Lie algebras. Then
we will use the representation theory of $n$-Lie algebras to give the
representation theory of simple linearly compact $n$-Lie
superalgebras.
Given an integer $n\geq 2$, an $n$-\textit{Lie algebra} $V$ is  a vector space over
a field $\mathbb{F}$, endowed with an $n$-ary anti-commutative product
$$
\begin{aligned}
\wedge^n V\,\,\,\,\, &\longrightarrow\,\,\,\, V\\
a_1\wedge\cdots\wedge a_n &\mapsto[a_1,\cdots,a_n],
\end{aligned}
$$
subject to the following Filippov-Jacobi identity:
\begin{equation}
\label{FJ}
\begin{aligned}
& [a_1, \dots , a_{n-1},[b_1, \dots , b_n]]=  [[a_1, \dots , a_{n-1}, b_1], b_2,\dots , b_n] +\\
&\,\,[b_1, [a_1, \dots , a_{n-1},b_2],b_3,\dots , b_n]+\dots +[b_1,\dots , b_{n-1}, [a_1, \dots , a_{n-1}, b_n]].
\end{aligned}
\end{equation}

A \textit{derivation} $D$ of
an $n$-Lie algebra $V$ is an endomorphism of the vector
space $V$  such that:
$$  D ([a_1 ,\cdots, a_n]) =
    [D (a_1) , a_2 , \cdots , a_n]
    + [a_1 , D(a_2) ,\cdots
    , a_n] + \cdots +
     [a_1 , \cdots , D (a_n)] .$$
As in the Lie algebra case ($n=2$), the meaning of the Filippov- Jacobi identity is that  all
endomorphisms $D_{a_1,\ldots ,a_{n-1}}$ of $V$ ($a_1,\ldots
a_{n-1} \in V$), defined by
\begin{displaymath}
D_{a_1,\ldots a_{n-1}}(a) = [a_1,\ldots ,a_{n-1},a]
\end{displaymath}
are derivations of $V$. These derivations are called \textit{inner}.

A subspace $W \subset V$ is called a $n$-\textit{Lie subalgebra} of the $n$-Lie algebra
$V$ if $[W,\cdots,W] \subset W.$ An $n$-Lie subalgebra $I \subset V$ of an $n$-Lie algebra is
called an \textit{ideal} if $[I,V,\cdots,V] \subset I.$ An $n$-Lie
algebra is called \textit{simple} if it has not proper ideal besides
$0.$

Let $V$ be an $n$-Lie algebra, $n\geq 3$.  We will
associate to $V$ a Lie algebra called \textit{the basic Lie
algebra}, following the presentation given in \cite{D1} and \cite{BL}.
Consider $ad:\wedge^{n-1}V\to \textrm{End}(V)$
given by $ad(a_1\wedge\ldots\wedge
a_{n-1})(b):=D_{a_1,\ldots a_{n-1}}(b)=[a_1,\ldots,a_{n-1},b]$. One can easily see that we
could have chosen the codomain of $ad$ to be $\textrm{Der}(V)$
(the set of derivations of $V$) instead of $\textrm{End}(V)$. $ad$ induces a map $\tilde{ad}:\wedge^{n-1}V\to
\textrm{End}(\wedge^\bullet V)$ defined as
$\tilde{ad}(a_1\wedge\ldots\wedge a_{n-1})(b_1\wedge\ldots\wedge
b_m)=\sum_{i=1}^{m}b_1\wedge\ldots\wedge[a_1,\ldots,a_{n-1},b_i]\wedge\ldots\wedge
b_m$. Denote by $\textrm{Inder}(V)$ the set of inner
derivations
of $V$, i.e. endomorphisms of the form $D_{a_1,\ldots a_{n-1}}=ad(a_1\wedge\ldots\wedge a_{n-1})$.\\
 The set of derivations $\hbox{Der}(V)$ of an $n$-Lie algebra V is a Lie algebra under the commutator and $\hbox{Inner}(V)$ is a Lie ideal. Notice the Lie brackect of $\hbox{Inner}(V)$ can be given by
$$[\hbox{ad}(a_1\wedge\cdots \wedge a_{n-1}), \, \hbox{ad}(b_1\wedge \cdots \wedge b_{n-1})]= \hbox{ad}(c_1\wedge\cdots \wedge c_{n-1}),$$
where
$$c_1\wedge\cdots \wedge c_{n-1}=\sum_{i=1}^{n-1}b_1\wedge\ldots\wedge[a_1,\ldots,a_{n-1},b_i]\wedge\ldots\wedge
b_{n-1}=\tilde{ad}(a)(b).$$
By skew symmetric condition $c_1\wedge\cdots \wedge c_{n-1}$ can be defined also by
$$c_1\wedge\cdots \wedge c_{n-1}=-\sum_{i=1}^{n-1}a_1\wedge\ldots\wedge[b_1,\ldots,b_{n-1},a_i]\wedge\ldots\wedge
a_{n-1}=-\tilde{ad}(b)(a).$$
Then $\tilde{ad}$ is skew-symmetric (cf. \cite{BL2}). We give to $\wedge^{n-1}V$ a Lie algebra structure under the Lie bracket defined by

\begin{equation}[a,b]= \tilde{ad}(a)(b).\end{equation}
 Therefore this proposition follows,
\begin{proposition}\label{propsuryectivo ad}
$[\cdotp,\cdotp]$ defines a Lie algebra structure on
$\wedge^{n-1}V $ and \linebreak $ad:\wedge^{n-1}V\to
\textrm{Inder}(V)$ is a surjective Lie algebra homomorphism.
\end{proposition}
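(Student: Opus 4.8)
The plan is to check, in order, that $[\cdot,\cdot]$ is a bilinear antisymmetric bracket, that $ad$ intertwines it with the commutator bracket on $\textrm{End}(V)$, and that the Jacobi identity holds; surjectivity onto $\textrm{Inder}(V)$ is then immediate. Bilinearity is clear because $\tilde{ad}$ is linear in both arguments, and antisymmetry $[a,b]=-[b,a]$ is just the skew-symmetry $\tilde{ad}(a)(b)=-\tilde{ad}(b)(a)$ already recorded above. For the homomorphism property I would first prove, for decomposable $a=a_1\wedge\cdots\wedge a_{n-1}$, $b=b_1\wedge\cdots\wedge b_{n-1}$ and arbitrary $c\in V$, the operator identity $[\,ad(a),ad(b)\,](c)=ad\big(\tilde{ad}(a)(b)\big)(c)$. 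Writing the left-hand side as $[a_1,\ldots,a_{n-1},[b_1,\ldots,b_{n-1},c]]-D_b(D_a(c))$ and applying the Filippov--Jacobi identity~(\ref{FJ}) with $b_n:=c$ rewrites the first term as $\sum_{i=1}^{n-1}[b_1,\ldots,[a_1,\ldots,a_{n-1},b_i],\ldots,b_{n-1},c]$ together with exactly the term $D_b(D_a(c))$; the latter cancels and what survives is $ad(\tilde{ad}(a)(b))(c)$ by the definition of $\tilde{ad}$. This is precisely the computation displayed just before the statement, so by bilinearity $[ad(a),ad(b)]=ad([a,b])$ on all of $\wedge^{n-1}V$.

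The heart of the argument is the Jacobi identity, and the cleanest way I see to get it is to promote the previous computation to the full exterior algebra. The key observation is that, for every $x\in\wedge^{n-1}V$, the endomorphism $\tilde{ad}(x)$ is a degree-zero derivation of the associative algebra $\wedge^\bullet V$; indeed this is exactly how it was defined on decomposable elements, and derivations are closed under linear combination. Since the commutator of two derivations is again a derivation, both $[\tilde{ad}(a),\tilde{ad}(b)]$ and $\tilde{ad}([a,b])$ are derivations of $\wedge^\bullet V$, and two derivations that agree on the generating subspace $V=\wedge^1V$ must coincide. On $V$ one has $\tilde{ad}(x)|_V=ad(x)$, so the required agreement on $V$ is nothing but the operator identity of the first paragraph. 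This yields $[\tilde{ad}(a),\tilde{ad}(b)]=\tilde{ad}([a,b])$ on $\wedge^\bullet V$. Evaluating this equality at an element $c\in\wedge^{n-1}V$ and unwinding the definition $[\cdot,\cdot]=\tilde{ad}(\cdot)(\cdot)$ gives $[a,[b,c]]-[b,[a,c]]=[[a,b],c]$, which is the Leibniz form of the Jacobi identity.

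Putting the pieces together shows that $(\wedge^{n-1}V,[\cdot,\cdot])$ is a Lie algebra and that $ad$ is a Lie algebra homomorphism onto its image; and $ad$ is surjective onto $\textrm{Inder}(V)$ essentially by definition, since $\textrm{Inder}(V)$ is by construction the span of the inner derivations $D_{a_1,\ldots,a_{n-1}}=ad(a_1\wedge\cdots\wedge a_{n-1})$, i.e.\ the image of the linear map $ad$. I expect the only genuinely delicate point to be the sign and index bookkeeping when applying~(\ref{FJ}): one must match the $n-1$ surviving summands of the Filippov--Jacobi expansion with the $n-1$ terms of $\tilde{ad}(a)(b)$ and confirm that the single ``inner'' term carrying $c$ reproduces $D_b(D_a(c))$ exactly. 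Everything else is formal; in particular the derivation argument produces the Jacobi identity directly, so we never need $ad$ to be injective.
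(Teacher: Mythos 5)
Your argument is sound given the one fact you import from the surrounding text (the skew-symmetry $\tilde{\hbox{ad}}(a)(b)=-\tilde{\hbox{ad}}(b)(a)$), and it is in fact more of a proof than the paper supplies. The paper's own ``proof'' consists of the commutator formula $[\hbox{ad}(a),\hbox{ad}(b)]=\hbox{ad}\bigl(\tilde{\hbox{ad}}(a)(b)\bigr)$, the assertion that $\tilde{\hbox{ad}}$ is skew-symmetric (cited to the erratum [BL1]), and the words ``Therefore this proposition follows''; the Jacobi identity on $\wedge^{n-1}V$ is never actually verified, and it does \emph{not} follow by transporting structure from $\textrm{Inder}(V)$, since $\hbox{ad}$ is not injective here (Remark 1(a) computes a nonzero kernel). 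Your derivation-extension argument fills exactly that hole: both $[\tilde{\hbox{ad}}(a),\tilde{\hbox{ad}}(b)]$ and $\tilde{\hbox{ad}}([a,b])$ are derivations of the associative algebra $\wedge^{\bullet}V$, they agree on the generating subspace $V$ by the Filippov--Jacobi computation, hence they agree everywhere, and evaluation on $\wedge^{n-1}V$ gives the Leibniz form of Jacobi. As you note, this never uses injectivity of $\hbox{ad}$, which is precisely the trap a naive reading of the paper's discussion would fall into. So: same skeleton (Filippov--Jacobi for the homomorphism property, surjectivity by definition), but a genuinely different and self-contained treatment of the key point the paper leaves to citations of [D1], [BL], [BL1].

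One caveat, which you share with the paper but state more honestly: the deferred antisymmetry is where all the real difficulty lives. The commutator formula only yields $\hbox{ad}\bigl(\tilde{\hbox{ad}}(a)(b)+\tilde{\hbox{ad}}(b)(a)\bigr)=0$, i.e.\ skew-symmetry \emph{modulo} $\textrm{Ker}(\hbox{ad})$, and since $\textrm{Ker}(\hbox{ad})\neq 0$ for $S^n$ this is strictly weaker than the identity in $\wedge^{n-1}V$. Indeed, in $S^3$ one has $\tilde{\hbox{ad}}(x_1\wedge x_2)(x_3\wedge x_1x_2)=1\wedge x_1x_2$, while $\tilde{\hbox{ad}}(x_3\wedge x_1x_2)(x_1\wedge x_2)=0$, so the bracket as literally defined is only a Leibniz bracket, and the clean Lie-algebra statement holds after passing to $\wedge^{n-1}V/\textrm{Ker}(\hbox{ad})\simeq\textrm{Inder}(V)$. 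Your proof of the Leibniz identity and of the homomorphism property survives this unchanged (neither uses antisymmetry), so your write-up localizes the problem correctly; just be aware that the step you flagged as ``already recorded above'' is not a formality but the precise content of the erratum the paper cites.
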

\noindent Consider
$$
\hbox{Ker}\hbox{(ad)}=\{a_1\wedge \cdots \wedge a_{n-1} \in
\wedge ^{n-1}V:\, \hbox{ad}(a_1\wedge \cdots \wedge
a_{n-1})(b)=0 \hbox{ for all}\, b \in V\}
$$
and
$$\hbox{Ker}\tilde{\hbox{(ad)}}=\{a_1\wedge \cdots \wedge
a_{n-1} \in \wedge ^{n-1}V:\, \tilde{\hbox{ad}}(a_1\wedge
\cdots \wedge a_{n-1})(b)=0 \hbox{ for all}\, b \in
\wedge^{\bullet}V \}$$
It is straightforward to check that $\hbox{Ker}(\hbox{ad})$ is an abelian ideal of $\wedge^{n-1}V$ and $\hbox{Ker} (\hbox{ad})\subseteq\hbox{Ker}(\tilde{\hbox{ad}})$. Thus
\begin{equation}\wedge ^{n-1}V/ \hbox{Ker}\hbox{(ad)}\simeq
\hbox{Inder}(V),
\end{equation} as Lie algebras. Thus,
\begin{equation}\label{iso1}\wedge^{n-1}V\simeq \hbox{Ker}\hbox{(ad)}\rtimes
\hbox{Inder}(V).\end{equation}

\,

A vector space $M$ is called an $n$-\textit{Lie module} for the $n$-Lie
algebra $V$, if on the direct sum $V\oplus M$ there is
a structure of  $n$-Lie algebra, such that the following
conditions are satisfied:
\begin{itemize}
\item $V$ is a subalgebra;
\item $M$ is an abelian ideal, i.e. when at least two slots of the
$n$-bracket are occupied by elements in $M$, the result is 0.
\end{itemize}

We have the following results that establish some relations between representations of $\wedge ^{n-1}V$ and $n$-Lie modules.

\begin{theorem}\label{Main theorem}\begin{itemize}\item[1)] Let $M$ be an $n$-Lie module  of the $n$-Lie algebra
$V$ and define $\rho: \wedge^{n-1} V \rightarrow
\hbox{End}(M) $
 given by $$\rho(a_1\wedge \cdots \wedge
a_{n-1})(m):=[a_1,\cdots, a_{n-1}, m]$$ for all $m \in M,$ where
this $n$-Lie bracket corresponds to the $n$-Lie structure of $V\oplus M.$ Then $\rho$ is an homomorphism of Lie algebras.

\item[2)]Given $(M, \rho)$ a representation of $\wedge^{n-1}V$
such that the two sided ideal $Q(V)$ of the universal enveloping algebra
of $\wedge ^{n-1}V,$ generated by the elements
\begin{equation}\label{vanesa condition}x_{a_1, \cdots,
a_{2n-2}}=[a_1, \cdots, a_n]\wedge a_{n+1}\wedge \cdots a_{2n-2}-$$
$$-\sum_{i=1}^n(-1)^{i+n}(a_1 \wedge \cdots \wedge\hat{a_i} \wedge \cdots \wedge a_n)(a_i\wedge a_{n+1}\wedge \cdots \wedge a_{2n-2})\end{equation}
acts trivially on $M,$ then $M$ is an $n$-Lie module.
\end{itemize}
\end{theorem}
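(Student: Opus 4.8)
The plan is to construct the required $n$-Lie algebra structure on $V\oplus M$ by hand and then verify the Filippov--Jacobi identity \eqref{FJ}, the only delicate point being precisely where the hypothesis on $Q(V)$ enters. First I would define the $n$-ary bracket on $V\oplus M$ by declaring it to agree with the given bracket on $V$, to vanish whenever two or more slots are occupied by elements of $M$, and, when exactly one slot carries an element $m\in M$, to be given by the representation: $[a_1,\ldots,a_{n-1},m]:=\rho(a_1\wedge\cdots\wedge a_{n-1})(m)$ for $a_i\in V$, extended to all positions of $m$ by anti-commutativity. With this definition anti-symmetry holds by construction, $V$ is a subalgebra, and $M$ is an abelian ideal, so the whole matter reduces to checking \eqref{FJ} on arbitrary arguments drawn from $V\oplus M$. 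I would organize this verification by the number $k$ of arguments lying in $M$ among the $2n-1$ entries. If $k=0$ the identity is just the Filippov--Jacobi identity of $V$. If $k\ge 2$ a short bookkeeping argument shows that every bracket occurring on either side contains at least two $M$-slots and hence vanishes, so both sides are $0$. This isolates the two genuinely nontrivial cases, both with $k=1$.

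Consider first the case where the unique $M$-entry is one of the $b_j$. Writing $a=a_1\wedge\cdots\wedge a_{n-1}$ and $b_{\hat{j}}=b_1\wedge\cdots\wedge\hat{b_j}\wedge\cdots\wedge b_n$, the left-hand side of \eqref{FJ} becomes $(-1)^{n-j}\rho(a)\rho(b_{\hat{j}})(m)$. On the right-hand side, the summand whose inner bracket acts on $b_j$ contributes $(-1)^{n-j}\rho(b_{\hat{j}})\rho(a)(m)$, while the remaining summands assemble, via the definition of the Lie bracket on $\wedge^{n-1}V$ as $\tilde{ad}(a)(b_{\hat{j}})$, into $(-1)^{n-j}\rho([a,b_{\hat{j}}])(m)$. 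Hence in this case \eqref{FJ} is exactly the assertion that $\rho$ is a homomorphism of Lie algebras, which holds by hypothesis (this is part 1 of the theorem).

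The remaining case, where the unique $M$-entry is one of the $a_l$, is the heart of the argument and the only place the triviality of $Q(V)$ is used. Setting $m=a_l$ and $\bar a=a_1\wedge\cdots\wedge\hat{a_l}\wedge\cdots\wedge a_{n-1}$, the left-hand side of \eqref{FJ} collapses to $\pm\,\rho\big([b_1,\ldots,b_n]\wedge\bar a\big)(m)$, since $[b_1,\ldots,b_n]$ remains in $V$ and only $m$ occupies an $M$-slot. On the right-hand side every summand has its inner bracket landing in $M$, yielding $\pm\,\rho(b_1\wedge\cdots\wedge\hat{b_i}\wedge\cdots\wedge b_n)\,\rho(b_i\wedge\bar a)(m)$. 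After tracking signs, the difference of the two sides is exactly $\rho\big(x_{b_1,\ldots,b_n,a_1,\ldots,\hat{a_l},\ldots,a_{n-1}}\big)(m)$, the image under $\rho$ of one of the generators \eqref{vanesa condition} of $Q(V)$ (the index count matches, since $n+(n-2)=2n-2$). As $Q(V)$ acts trivially, this vanishes, establishing \eqref{FJ} in the last case and completing the proof.

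I expect the main obstacle to be the sign bookkeeping in the final case: one must move $m$ into the last slot of each bracket, reorder the wedge factors, and confirm that the resulting signs line up term by term with the coefficients $(-1)^{i+n}$ appearing in the generator $x_{a_1,\ldots,a_{2n-2}}$ of \eqref{vanesa condition}. Everything else is structural, and the $k\ge 2$ vanishing and the homomorphism case are routine once the bracket is set up.
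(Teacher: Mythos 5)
Your proposal is correct and takes essentially the same route as the paper: there, too, the bracket on $V\ltimes M$ is defined exactly as you do, and the Filippov--Jacobi identity is reduced to your two $k=1$ cases --- the one with $m$ in an inner slot being precisely the statement that $\rho$ is a Lie algebra homomorphism, and the one with $m$ in an outer slot being, after the sign bookkeeping, exactly the condition that the generators of $Q(V)$ act trivially. Your write-up is merely more explicit about the $k=0$ and $k\ge 2$ cases, which the paper leaves implicit.
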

\begin{proof}
Part $1)$ is direct from the definition of the Lie bracket in
$\wedge^{n-1}V$ and the Filippov-Jacobi  identity of the
$n$-Lie bracket corresponding to the $n$-Lie structure of the
semidirect product of $V$ and $M.$

Let's prove part $2).$ Consider the $n$-ary map $[[\,,\,]]:
\wedge^{n-1}(V\ltimes M)\rightarrow V\ltimes M$ such
that $M$ is an abelian ideal   and $V$ is a subalgebra with its
own $n$-Lie bracket and define
\begin{equation}\label{doble bracket}[[a_1, \cdots, a_{n-1},
m]]:=\rho(a_1\wedge\cdots \wedge a_{n-1})(m)\end{equation} where
$a_i \in V, \, m \in M.$ We need to show the Filippov-Jacobi
identity holds for the $n$-ary bracket defined above. It is
enough to show that
$$[[a_1,\cdots, a_{n-1},[[b_1,\cdots,
b_{n-1},m]]]]-[[b_1,\cdots, b_{n-1},[[a_1,\cdots, a_{n-1},m]]]]=$$
\begin{equation}\label{first Jacobi identity}
\sum\limits_{i=1}^{n-1}[[b_1,\cdots [a_1,\cdots, a_{n-1}, b_i],
\cdots, b_{n-1}, m]]
\end{equation}
and

 $[[[a_1,\cdots, a_n],a_{n+1},\cdots, a_{2n-2},m]]=$
\begin{equation}\label{second Jacobi identity}\sum\limits_{i=1}^{n-1}(-1)^{n+i+1}[[a_{1},\cdots
[a_{n+1},\cdots, a_{2n-2}, a_{i},m], \cdots, a_{2n-2}]]
\end{equation}
hold for $a_i$ and $b_i \in V$  and $m
\in M.$

Since $\rho$ is a representation of $\wedge^{n-1} V$ and $\rho[a,b]=\rho(\tilde{ad}(a)(b))$ by definition of the Lie bracket, then  the identity (\ref{first Jacobi identity}) holds.

Let's prove the identity (\ref{second Jacobi identity}).
 Writing the identity (\ref{second Jacobi identity}) using (\ref{doble bracket})
we have that
$$\rho([a_1,\ldots,a_n]\wedge a_{n+1}\wedge\ldots\wedge
a_{2n-2})(m)$$
\begin{equation}\label{eq3}=\sum_{i=1}^n(-1)^{i+n}\rho(a_1\wedge\ldots\wedge\hat{a_i}\wedge\ldots\wedge
a_n)\rho(a_i\wedge a_{n+1}\wedge\ldots\wedge a_{2n-2})(m).
\end{equation}
Therefore (\ref{eq3}) is equivalent to the fact that the ideal $Q(V)$ acts
trivially on $M,$ finishing our proof.
\end{proof}
The following Proposition was proven in  \cite{D1}.
\begin{proposition}\label{prop2} Let $M$ be a $n$-Lie module over an $n$-Lie algebra $V.$ Then any submodule,
any factor-module and dual module of $M$ are also $n$-Lie modules.
If $M_1$ and $M_2$ are $n$-Lie modules over $V,$ then their direct
sum $M_1\oplus M_2$ is also $n$-Lie module.
%

\end{proposition}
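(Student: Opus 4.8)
The plan is to reduce every assertion to a statement about representations of the Lie algebra $\wedge^{n-1}V$ by means of Theorem \ref{Main theorem}. That theorem sets up a correspondence: by part $1)$ an $n$-Lie module $M$ produces a Lie algebra homomorphism $\rho\colon\wedge^{n-1}V\to\textrm{End}(M)$, and by part $2)$ a representation of $\wedge^{n-1}V$ on which the two-sided ideal $Q(V)$ acts trivially (that is, $\rho(Q(V))=0$, with $\rho$ extended to the universal enveloping algebra) is automatically an $n$-Lie module. Thus for each of the four constructions it suffices to exhibit the natural representation of $\wedge^{n-1}V$ on the new space and to check that $Q(V)$ continues to act by zero; the converse direction of Theorem \ref{Main theorem} then manufactures the $n$-Lie module structure.

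For a submodule $N\subseteq M$ the subspace $N$ is $\rho$-invariant, so $\rho$ restricts to a representation $\rho|_{N}$ of $\wedge^{n-1}V$; since $\rho(Q(V))=0$ on all of $M$, a fortiori $\rho|_{N}(Q(V))=0$, and $N$ is an $n$-Lie module. For a factor-module $M/N$ the quotient representation $\bar\rho$ is well defined because $N$ is invariant, it is again a Lie algebra homomorphism, and $\bar\rho(Q(V))=0$ is inherited from $\rho(Q(V))=0$. For the direct sum $M_1\oplus M_2$ one takes $\rho_1\oplus\rho_2$, which is a Lie algebra homomorphism, and since $(\rho_1\oplus\rho_2)(Q(V))=\rho_1(Q(V))\oplus\rho_2(Q(V))=0$, Theorem \ref{Main theorem} applies once more. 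All three cases are therefore routine, the only point being that the operation performed on representations visibly preserves the property $\rho(Q(V))=0$.

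The delicate case, and the one I expect to be the main obstacle, is the dual module $M^{*}$. Here the natural candidate is the contragredient action, i.e. the $n$-ary bracket $[a_1,\ldots,a_{n-1},\phi](m):=-\phi([a_1,\ldots,a_{n-1},m])$ for $\phi\in M^{*}$ and $m\in M$, which corresponds to the representation $\rho^{*}(x)=-\rho(x)^{t}$. That $\rho^{*}$ is a homomorphism of Lie algebras is standard, so condition (\ref{first Jacobi identity}) holds automatically; the difficulty is that passing to the dual reverses the order of composition of operators, so the triviality of $Q(V)$ does not transport formally. Concretely, for a generator $x_{a_1,\ldots,a_{2n-2}}$ as in (\ref{vanesa condition}) the contragredient replaces each product $\rho(A_i)\rho(B_i)$, with $A_i=a_1\wedge\cdots\widehat{a_i}\cdots\wedge a_n$ and $B_i=a_i\wedge a_{n+1}\wedge\cdots\wedge a_{2n-2}$, by $\rho(B_i)\rho(A_i)$, and one must show that the reordered identity still follows from the one valid on $M$. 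I would settle this by verifying the second Filippov--Jacobi identity (\ref{second Jacobi identity}) directly for the bracket above: pairing both sides against an arbitrary $m\in M$ reduces the claim, through the duality pairing, to the identity (\ref{eq3}) already established for $M$, while the reordering of operators is absorbed using the anticommutativity of the $n$-ary bracket together with the Lie relations of $\wedge^{n-1}V$. This commutator bookkeeping, which guarantees $\rho^{*}(Q(V))=0$, is the crux of the argument; once it is in place, $M^{*}$ is an $n$-Lie module by Theorem \ref{Main theorem}, completing the proof.
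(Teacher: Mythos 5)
You should first be aware that the paper itself contains no proof of Proposition \ref{prop2}: it is quoted from \cite{D1}. So your argument has to stand entirely on its own. The framework you set up is sound: by Theorem \ref{Main theorem}, together with the equivalence (established in its proof) of identity (\ref{second Jacobi identity}) with (\ref{eq3}), i.e.\ with the triviality of $Q(V)$, an $n$-Lie module structure on $M$ is the same thing as a representation $\rho$ of $\wedge^{n-1}V$ satisfying $\rho(Q(V))=0$. Your treatment of submodules, factor modules and direct sums is then correct and complete, since the condition $\rho(Q(V))=0$ manifestly passes to invariant subspaces, quotients and direct sums.

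The dual module, however, is left with a genuine gap, located exactly at the step you yourself call the crux. Write $\rho^{*}(x)=-\rho(x)^{t}$, $A_i=a_1\wedge\cdots\wedge\hat a_i\wedge\cdots\wedge a_n$, $B_i=a_i\wedge a_{n+1}\wedge\cdots\wedge a_{2n-2}$, $C=[a_1,\ldots,a_n]\wedge a_{n+1}\wedge\cdots\wedge a_{2n-2}$. Transposing, condition (\ref{eq3}) for $(M^{*},\rho^{*})$ is equivalent to $\sum_{i}(-1)^{i+n}\rho(B_i)\rho(A_i)=-\rho(C)$ as operators on $M$, whereas what you have is $\sum_{i}(-1)^{i+n}\rho(A_i)\rho(B_i)=+\rho(C)$. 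Subtracting and using that $\rho$ is a Lie algebra homomorphism, the dual-module claim is \emph{equivalent} to the additional identity $\sum_{i}(-1)^{i+n}\rho([A_i,B_i])=2\rho(C)$, where $[A_i,B_i]=\tilde{ad}(A_i)(B_i)$. This is not absorbed by ``anticommutativity plus the Lie relations'': expanding $\tilde{ad}(A_i)(B_i)$ gives $\sum_i(-1)^{i+n}[A_i,B_i]=n\,C+\sum_{i,k}(-1)^{i+n}a_i\wedge a_{n+1}\wedge\cdots\wedge[a_1,\ldots,\hat a_i,\ldots,a_n,a_k]\wedge\cdots\wedge a_{2n-2}$, so you must show that $\rho$ annihilates the element $(n-2)\,C+\sum_{i,k}(-1)^{i+n}a_i\wedge\cdots\wedge[a_1,\ldots,\hat a_i,\ldots,a_n,a_k]\wedge\cdots\wedge a_{2n-2}$ of $\wedge^{n-1}V$. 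For $n=2$ this element vanishes identically (which is why the classical Lie-algebra contragredient argument feels routine), but for $n\geq 3$ it does not, and its annihilation requires instances of (\ref{eq3}) for \emph{permuted} tuples combined with the Filippov--Jacobi identity of $V$, not the single instance you invoke. For example, for $n=3$, writing $R_{ij}=\rho(a_i\wedge a_j)$ and using (\ref{eq3}) for the four tuples obtained by singling out each $a_i$, the requirement becomes the anticommutator identity $\{R_{12},R_{34}\}-\{R_{13},R_{24}\}+\{R_{14},R_{23}\}=0$, which certainly does not follow from formally reordering one instance of (\ref{eq3}). Supplying this computation is precisely the content of the proof in \cite{D1} that the paper cites (and the delicacy of such identities is underscored by the fact that the related skew-symmetry of $\tilde{ad}$ required the erratum \cite{BL2}); until it is carried out, the dual-module assertion remains unproven.
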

As in \cite{D1} we deduce the following Corollary.
\begin{corollary}\label{corol1} Let $M$ be a $n$-Lie module over $n$-Lie algebra $V.$ Then
\begin{itemize}
\item [a)] $M$ is irreducible if and only if $M$ is irreducible as a Lie module over Lie algebra $\wedge^{n-1}V.$
\item [b)] $M$ is completely reducible, if only if $M$ is completely reducible as a Lie module over Lie algebra $\wedge^{n-1}V.$
\end{itemize}
\end{corollary}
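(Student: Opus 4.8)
The plan is to show that the poset of $n$-Lie submodules of $M$ coincides \emph{verbatim} with the poset of Lie submodules of $M$ regarded as a module over $\wedge^{n-1}V$ via the representation $\rho$ of Theorem \ref{Main theorem}; once this identification is in place, both assertions become formal.

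First I would record the key observation. By Theorem \ref{Main theorem}, part $1)$, the $n$-Lie module structure on $M$ produces the Lie algebra homomorphism $\rho:\wedge^{n-1}V\to\mathrm{End}(M)$ with $\rho(a_1\wedge\cdots\wedge a_{n-1})(m)=[a_1,\cdots,a_{n-1},m]$. Hence a subspace $N\subseteq M$ satisfies $[a_1,\cdots,a_{n-1},N]\subseteq N$ for all $a_i\in V$ if and only if $\rho(a_1\wedge\cdots\wedge a_{n-1})(N)\subseteq N$ for every generator $a_1\wedge\cdots\wedge a_{n-1}$; that is, $N$ is stable under the $n$-bracket exactly when it is a Lie submodule over $\wedge^{n-1}V$. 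Thus the two families of invariant subspaces literally agree at the level of subspaces.

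Next I would promote an invariant subspace to a genuine $n$-Lie submodule. Since $M$ is an $n$-Lie module, the second Filippov--Jacobi identity (\ref{second Jacobi identity}) holds on $M$; by the computation (\ref{eq3}) in the proof of Theorem \ref{Main theorem} this is equivalent to the two-sided ideal $Q(V)$ acting trivially on $M$. Restricting $\rho$ to a $\rho$-invariant subspace $N$, the operators attached to the generators (\ref{vanesa condition}) of $Q(V)$ still annihilate $N$, so $Q(V)$ acts trivially on the subrepresentation $N$ as well. Theorem \ref{Main theorem}, part $2)$, then applies to $(N,\rho|_N)$ and endows $N$ with an $n$-Lie submodule structure; conversely Proposition \ref{prop2} already tells us that every $n$-Lie submodule of $M$ is an $n$-Lie module. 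This closes the correspondence in both directions.

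With the lattice identification established, part a) is immediate: $M$ admits a proper nonzero $n$-Lie submodule if and only if it admits a proper nonzero Lie submodule over $\wedge^{n-1}V$, so the two irreducibility notions coincide. For part b), I would argue that a decomposition $M=\bigoplus_i M_i$ into irreducible $n$-Lie submodules is, by part a) together with the lattice identification, exactly a decomposition into irreducible Lie submodules over $\wedge^{n-1}V$, and vice versa; Proposition \ref{prop2} guarantees that such direct sums remain in the $n$-Lie category. Hence $M$ is completely reducible as an $n$-Lie module if and only if it is completely reducible over $\wedge^{n-1}V$. The only genuine point requiring care --- and the step I expect to be the main obstacle --- is the promotion argument of the third paragraph: one must verify that triviality of $Q(V)$ genuinely passes to the subrepresentation $N$, which is precisely what turns an abstract $\rho$-invariant subspace into a bona fide $n$-Lie submodule rather than merely a Lie submodule.
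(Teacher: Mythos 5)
Your proof is correct and matches the paper's (implicit) argument: the paper gives no written proof, deducing the corollary ``as in \cite{D1}'' from exactly the identification you make --- via Theorem \ref{Main theorem}, part 1), a subspace $N\subseteq M$ is invariant under all operators $[a_1,\ldots,a_{n-1},\cdot\,]$ if and only if it is $\rho(\wedge^{n-1}V)$-invariant, so the two submodule lattices coincide and both a) and b) follow formally (with Proposition \ref{prop2} handling direct sums). The one comment is that your third paragraph, the promotion of an invariant subspace to an $n$-Lie submodule via triviality of $Q(V)$, is valid but heavier than needed: for $\rho$-invariant $N$, the subspace $V\oplus N$ is closed under the bracket of the $n$-Lie algebra $V\oplus M$, so the Filippov--Jacobi identity holds on it by restriction and $N$ is an $n$-Lie submodule outright, which is precisely what Proposition \ref{prop2} records.
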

Since we are aiming the study of the representation theory of $V$ as an $n$-Lie algebra,  Theorem \ref{Main theorem}  shows that it is closely
related to the representation theory of the Lie algebra
$\wedge^{n-1}V.$ But first, due to (\ref{iso1}),
we need to characterize the ideal $\hbox{Ker}\hbox{(ad)}.$ We have
the following Lemma.
\begin{lemma}\label{lem2} If $a \in \hbox{Ker}(\hbox{ad})$ and  $\rho$ is a
representation of $\wedge^{n-1}V,$ then $\rho(a)$ commutes with $\rho(b)$ for any $b \in
\wedge^{n-1}V.$
\end{lemma}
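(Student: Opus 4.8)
The plan is to reduce everything to the homomorphism property of $\rho$ together with the inclusion $\hbox{Ker}(\hbox{ad}) \subseteq \hbox{Ker}(\tilde{\hbox{ad}})$ noted above. First I would recall that, since $\rho$ is a representation of the Lie algebra $\wedge^{n-1}V$, for any $a,b \in \wedge^{n-1}V$ we have $\rho(a)\rho(b) - \rho(b)\rho(a) = \rho([a,b])$, where $[\cdot,\cdot]$ is the Lie bracket on $\wedge^{n-1}V$. Hence $\rho(a)$ commutes with $\rho(b)$ exactly when $\rho([a,b]) = 0$, and it suffices to prove that $[a,b] = 0$ whenever $a \in \hbox{Ker}(\hbox{ad})$.

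Next I would invoke the definition of the bracket, namely $[a,b] = \tilde{ad}(a)(b)$. Since $a \in \hbox{Ker}(\hbox{ad})$ and $\hbox{Ker}(\hbox{ad}) \subseteq \hbox{Ker}(\tilde{\hbox{ad}})$, the element $a$ lies in $\hbox{Ker}(\tilde{\hbox{ad}})$, so $\tilde{ad}(a)(c) = 0$ for every $c \in \wedge^{\bullet}V$. Taking $c = b \in \wedge^{n-1}V \subset \wedge^{\bullet}V$ gives $[a,b] = \tilde{ad}(a)(b) = 0$, and therefore $\rho([a,b]) = \rho(0) = 0$, i.e. $\rho(a)$ and $\rho(b)$ commute. (If $a$ is written as a sum of decomposable wedges, bilinearity of both the bracket and $\rho$ reduces the claim to the decomposable case, so no generality is lost.)

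There is essentially no obstacle here: the entire content sits in the inclusion $\hbox{Ker}(\hbox{ad}) \subseteq \hbox{Ker}(\tilde{\hbox{ad}})$, which says more than that $\hbox{Ker}(\hbox{ad})$ is an abelian ideal — it says that such $a$ are in fact central in $\wedge^{n-1}V$, so that $[a,b]$ vanishes against all of $\wedge^{n-1}V$ and not merely against $\hbox{Ker}(\hbox{ad})$. Once centrality of $a$ is in hand, the homomorphism identity $[\rho(a),\rho(b)] = \rho([a,b]) = 0$ finishes the argument immediately.
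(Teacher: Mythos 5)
Your proof is correct and is essentially the paper's own argument: both rest on the inclusion $\hbox{Ker}(\hbox{ad})\subseteq\hbox{Ker}(\tilde{\hbox{ad}})$ and the homomorphism identity $\rho(a)\rho(b)-\rho(b)\rho(a)=\rho([a,b])=\rho(\tilde{\hbox{ad}}(a)(b))=0$. Your parenthetical reduction to decomposable wedges is unnecessary (the argument applies verbatim to any $a\in\hbox{Ker}(\hbox{ad})$, since $\tilde{\hbox{ad}}$ is linear), but it does no harm.
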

\begin{proof}Consider $a \in \hbox{Ker}(\hbox{ad})\subseteq\hbox{Ker}(\tilde{\hbox{ad}}).$ By definition of Lie bracket in $\wedge^{n-1}V $ follows
$$\rho(a)\rho(b)-\rho(b)\rho(a)=\rho[a,b]=\rho(\tilde{\hbox{ad}}(a)(b))=0.$$
\end{proof}
Thus, we have the following Proposition.
\begin{proposition}\label{Schur lemma} Let $\rho$ be an irreducible representation of
$\wedge^{n-1}V$ in $M$ with countable dimension. Then $\hbox{Ker}(\text{ad})$ acts by
scalars in $M.$
\end{proposition}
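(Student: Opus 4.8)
The plan is to recognize this statement as an instance of Schur's lemma in the strengthened form due to Dixmier, where the countable-dimensionality hypothesis is exactly what is needed to pass from a division algebra to the ground field. Throughout I work over $\mathbb{F}$ algebraically closed (as in Theorem~\ref{th1}).

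First I would use Lemma~\ref{lem2} to place $\rho(a)$ inside the commutant of the representation. Concretely, for any $a \in \hbox{Ker}(\hbox{ad})$ the operator $\rho(a)$ commutes with $\rho(b)$ for every $b \in \wedge^{n-1}V$, so $\rho(a)$ belongs to $D := \hbox{End}_{\wedge^{n-1}V}(M)$, the algebra of module endomorphisms of $M$. Since $\rho$ is irreducible, the standard Schur argument applies: for $0 \neq d \in D$ both $\hbox{Ker}(d)$ and $\hbox{Im}(d)$ are submodules of $M$, hence $d$ is invertible. Thus $D$ is a division algebra over $\mathbb{F}$.

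Next I would bound $\dim_{\mathbb{F}} D$ using the hypothesis on $M$. Fix a nonzero vector $v \in M$ and consider the evaluation map $D \to M$, $d \mapsto d(v)$. It is injective, because if $d \neq 0$ then $d$ is invertible and $d(v) = 0$ forces $v = d^{-1}(d(v)) = 0$, a contradiction. Hence $\dim_{\mathbb{F}} D \leq \dim_{\mathbb{F}} M \leq \aleph_0$, i.e. $D$ has at most countable dimension over $\mathbb{F}$.

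Finally comes the crucial step, which is where I expect the only genuine subtlety to lie. Set $T = \rho(a)$ and suppose, for contradiction, that $T \notin \mathbb{F}\cdot\hbox{id}$. If $T$ were algebraic over $\mathbb{F}$, a minimal polynomial would factor as $\prod_i (x - \lambda_i)$ over the algebraically closed field $\mathbb{F}$, giving $\prod_i (T - \lambda_i) = 0$ in the division algebra $D$; having no zero divisors, $D$ would force $T = \lambda_i \in \mathbb{F}$, contrary to assumption. So $T$ is transcendental, and since $D$ is a division algebra the field of rational functions $\mathbb{F}(T) \cong \mathbb{F}(x)$ embeds in $D$. But the family $\{(T-\lambda)^{-1} : \lambda \in \mathbb{F}\}$ is $\mathbb{F}$-linearly independent in $D$ (transport the linear independence of the partial fractions $\{1/(x-\lambda)\}$ in $\mathbb{F}(x)$ through this embedding) and is uncountable, contradicting $\dim_{\mathbb{F}} D \leq \aleph_0$. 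Therefore $T \in \mathbb{F}\cdot\hbox{id}$, and $\hbox{Ker}(\hbox{ad})$ acts by scalars on $M$, as claimed.
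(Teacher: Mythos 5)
Your proposal is correct and takes essentially the same approach as the paper: the paper's entire proof reads ``Immediate from the Lemma above and Schur Lemma,'' i.e.\ Lemma~\ref{lem2} plus the countable-dimension (Dixmier) form of Schur's lemma, and your argument is exactly a detailed proof of that cited black box, using Lemma~\ref{lem2} to place $\rho(a)$ in the commutant and then running the division-algebra/transcendence argument. The one caveat---shared implicitly by the paper---is that your final step (uncountability of the family $\{(T-\lambda)^{-1}:\lambda\in\mathbb{F}\}$) requires $\mathbb{F}$ to be uncountable (e.g.\ $\mathbb{F}=\mathbb{C}$), not merely algebraically closed of characteristic zero, since over $\overline{\mathbb{Q}}$ that family is countable and the contradiction evaporates.
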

\begin{proof} Immediate from the Lemma above and Schur Lemma.
\end{proof}
\begin{theorem} \label{thm3} Let $(M,\rho)$ be an irreducible representation of
$\wedge^{n-1}V$ such that
the ideal $Q(V)$ acts trivially on $M$. Then
\begin{itemize}
\item[a)] $\rho|_{\hbox{Ker}\hbox{(ad)}}:=\lambda\,Id$ with $Id$
the identity map in $\hbox{End}(M)$ and $\lambda \in
(\hbox{Ker}\hbox{(ad)})^{\ast}$  is an $\hbox{Inder}(V)$-module homomorphism (where $\mathbb{F}$ is thought as a trivial
$\hbox{Inder}(V)$-module),
\item[b)] $\rho|_{\hbox{Inder}(V)}$ is an irreducible representation of $\hbox{Inder}(V)$
 such that the ideal
$Q(V)$ acts trivially on $M.$
\item[c)] $\rho=\rho|_{\hbox{Inder}(V)}\oplus \lambda\, Id.$
\end{itemize}
\end{theorem}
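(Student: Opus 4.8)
The plan is to combine the structural isomorphism $\wedge^{n-1}V \simeq \hbox{Ker}(\hbox{ad}) \rtimes \hbox{Inder}(V)$ from (\ref{iso1}) with Schur-type arguments already assembled in the excerpt. First I would invoke Proposition \ref{Schur lemma}: since $(M,\rho)$ is irreducible and $M$ has countable dimension, $\hbox{Ker}(\hbox{ad})$ acts by scalars, so there is a linear functional $\lambda \in (\hbox{Ker}(\hbox{ad}))^{\ast}$ with $\rho|_{\hbox{Ker}(\hbox{ad})} = \lambda\, Id$. This immediately gives the scalar form asserted in part a). To see that $\lambda$ is an $\hbox{Inder}(V)$-module homomorphism into the trivial module $\mathbb{F}$, I would note that $\hbox{Ker}(\hbox{ad})$ is an abelian \emph{ideal} of $\wedge^{n-1}V$, so for $a \in \hbox{Ker}(\hbox{ad})$ and $b \in \wedge^{n-1}V$ the element $[b,a]$ again lies in $\hbox{Ker}(\hbox{ad})$; applying $\rho$ and using Lemma \ref{lem2} (which says $\rho(a)$ is central), the bracket $\rho[b,a] = \rho(b)\rho(a) - \rho(a)\rho(b) = 0$. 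Hence $\lambda([b,a]) = 0$, which is exactly the statement that the $\hbox{Inder}(V)$-action on $\lambda$ is trivial, i.e. $\lambda$ is a homomorphism of $\hbox{Inder}(V)$-modules.

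\textbf{The restriction to $\hbox{Inder}(V)$.} For part b) I would use the splitting in (\ref{iso1}) to view $\hbox{Inder}(V)$ as a Lie subalgebra complementary to $\hbox{Ker}(\hbox{ad})$. The key observation is that since $\hbox{Ker}(\hbox{ad})$ acts by scalars, any $\wedge^{n-1}V$-invariant subspace of $M$ is automatically $\hbox{Inder}(V)$-invariant and conversely: a subspace stable under $\hbox{Inder}(V)$ is stable under all of $\wedge^{n-1}V = \hbox{Ker}(\hbox{ad}) + \hbox{Inder}(V)$ precisely because the scalar action of $\hbox{Ker}(\hbox{ad})$ preserves every subspace. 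Therefore the lattice of $\hbox{Inder}(V)$-submodules coincides with the lattice of $\wedge^{n-1}V$-submodules, and irreducibility of $\rho$ as a $\wedge^{n-1}V$-module transfers directly to irreducibility of $\rho|_{\hbox{Inder}(V)}$. The triviality of the $Q(V)$-action is inherited at once, since $Q(V)$ is an ideal in the enveloping algebra and its generators (\ref{vanesa condition}) act as zero regardless of which subalgebra we restrict to.

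\textbf{Assembling part c) and the main obstacle.} Part c) is then a matter of writing the action of a general element $x = k + d$ with $k \in \hbox{Ker}(\hbox{ad})$ and $d \in \hbox{Inder}(V)$ as $\rho(x) = \lambda(k)\, Id + \rho|_{\hbox{Inder}(V)}(d)$, which is precisely the content of the direct-sum decomposition $\rho = \rho|_{\hbox{Inder}(V)} \oplus \lambda\, Id$ once one reads it as a decomposition of the representation on $\wedge^{n-1}V$ into its restrictions on the two summands of the semidirect product. I expect the main subtlety to be part b): one must be careful that the semidirect-product structure does not introduce cross-terms when transferring irreducibility, and the clean resolution is exactly the remark that a \emph{scalar} action on an abelian ideal cannot obstruct or create invariant subspaces. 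This is where the countable-dimension hypothesis (needed for Schur in Proposition \ref{Schur lemma}) does the real work; everything else is bookkeeping on the decomposition (\ref{iso1}).
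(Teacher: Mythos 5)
Your proposal is correct and takes essentially the same route as the paper: Proposition \ref{Schur lemma} (Schur) gives the scalar action of $\hbox{Ker}(\hbox{ad})$, Lemma \ref{lem2} gives the vanishing of $\lambda$ on brackets for part a), and the decomposition (\ref{iso1}) yields part c). The only difference is in part b), where you transfer irreducibility directly by observing that any $\hbox{Inder}(V)$-invariant subspace is automatically $\wedge^{n-1}V$-invariant because the scalar action of $\hbox{Ker}(\hbox{ad})$ preserves every subspace, while the paper argues by contradiction with a subspace $\tilde{N}=\rho(\hbox{Inder}(V))(m)$ inside a putative proper submodule; both versions rest on the identical key observation, and yours is if anything a cleaner packaging of it.
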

\begin{proof} Let's prove part a). If $l \in \hbox{Inder}(V)$ and $a \in {\hbox{Ker}\hbox{(ad)}},$ since
${\hbox{Ker}\hbox{(ad)}}$ is an abelian  ideal, by Lemma \ref{lem2} we have
$0=\rho([l,a])(m)=\lambda([l,a])\,Id(m)$ . Thus
$\lambda$ is an $\hbox{Inder}(V)$-module homomorphism.

 Let's prove part b). Consider $N\varsubsetneq M$ a non-trivial $\hbox{Inder}(V)$-{sub\-re\-pre\-sen\-ta\-tion} of $M$ and take $0 \neq m \in M$ such that $0
\neq \tilde{N}:=\rho(\hbox{Inder}(V))(m)\subseteq N.$ Note if
$a \in {\hbox{Ker}\hbox{(ad)}},$ due to Lemma \ref{lem2} and
Proposition \ref{Schur lemma},\, $\rho(a)
\tilde{N}=\rho(a)\rho(\hbox{Inder}(\frak
g))(m)=\rho(\hbox{Inder}(V)\rho(a)(m)=\lambda(a)\rho(\hbox{Inder}(V))(m)=\tilde{N}.$
{U\-sing} (\ref{iso1}), we can conclude that $0 \neq \tilde{N}$ is a
subrepresentation of $M$ as a  $\wedge^{n-1}V$-module  but $M$ was irreducible by hypothesis
which is a {con\-tra\-dic\-tion}. Part c) is an immediate consequence of
(\ref{iso1}) and Lemma \ref{lem2}.\end{proof}
\section{The simple linearly compact $n$-Lie algebra $S^n$}

We denote by $S^n$ the simple infinite-dimensional
linearly compact $n$-Lie superalgebra , whose
underlying vector space is the linearly compact vector space of formal power
series $\mathbb{F}[[x_1,\cdots,x_{n}]]$ endowed with the following
$n$-ary bracket:
\begin{equation}\label{n-bracket}[f_1, \cdots, f_n]=\hbox{det}\left(
  \begin{array}{cc}
  D_1(f_1)\quad\cdots \quad D_1(f_n)\\
     \cdots\cdots\cdots\cdots\cdots\cdots \cdots \\
    D_{n}(f_1)\quad \cdots \quad D_{n}(f_n) \\
  \end{array}
\right)\end{equation}
 where $D_i=\frac{\partial}{\partial x_i}.$\\

\begin{remark}\label{rmk1} \begin{itemize}
\item [(a)] Consider the $n$-Lie algebra
$S^n$ endowed with the $n$-bracket (\ref{n-bracket}) and the map
$\hbox{ad}: \wedge^{n-1} S^n\rightarrow \hbox{Inder}(S^n),$ which sends  $f_1\wedge\cdots\wedge f_{n-1}\rightarrow
\hbox{ad}(f_1\wedge\cdots\wedge f_{n-1})$. By Proposition 1, it  is an epimorphism  of Lie
algebras and we will show that  in this case,
$$\hbox{ker}(ad) =\hbox{span} \left\{f_1\wedge\cdots \wedge f_{n-1}:\, f_i \in \mathbb{F}, \hbox{ for some}\, 1\leq i \leq n-1 \right\}.$$
Note that  any $ f_1\wedge\cdots \wedge f_{n-1}$ such that $f_i \in \mathbb{F}$ for some $\, 1\leq i \leq n-1 $, clearly is in $\hbox{ker}(ad)$. On the other hand, if we assume that $ f_1\wedge\cdots \wedge f_{n-1}\in \hbox{ker}(ad)$, we have
\begin{equation}\label{determinant}\hbox{ad}(f_1\wedge\cdots\wedge f_{n-1})(f) =\hbox{det}\left(
  \begin{array}{cc}
  D_1(f_1)\quad\cdots \quad D_1(f)\\
    \cdots\cdots\cdots\cdots\cdots\cdots \cdots \\
    D_{n}(f_1)\quad \cdots \quad D_{n}(f) \\
  \end{array}
\right)=0,
\end{equation}
 for any $f\in \wedge^{n-1}S^n.$ Since $f$ is arbitary, we have that at least two of the first $ n-1$ columns of this matrix should be
linearly dependent, in other words, there exist $i< j \in \{1, \cdots, n-1\}$ such that $\bigtriangledown f_i = c\bigtriangledown f_j$ for some $c \in \mathbb{F}$. Since $\mathbb{F}$ is a field of characteristic zero, we can deduce that $f_i = cf_j + k,$ for some  $k\in \mathbb{F}$. Thus $ f_1\wedge\cdots \wedge f_{n-1} = k\, (f_1\wedge\cdots\wedge 1 \wedge\cdots \wedge f_j\wedge\cdots\wedge f_{n-1})$.
\item[(b)]  Let $(M,\rho)$ be an irreducible representation of
$\wedge^{n-1}S^n$ such that
the ideal $Q(S^n)$ acts trivially on $M$. By Theorem 3, parts (a) and (b), we have that $\rho|_{\hbox{Ker}\hbox{(ad)}}:=\lambda\,Id$, with $Id$
the identity map in $\hbox{End}(M)$ and  $\lambda\in(\hbox{Ker}\hbox{(ad)})^*$, and $\rho|_{\hbox{Inder}(S^n)}$ is an irreducible representation of $\hbox{Inder}(S^n)$
 such that the ideal
$Q(S^n)$ acts trivially on $M.$ We will show that
$\lambda = 0.$ Consider $ x_{f_1, \cdots,
f_{2n-2}}$
an element of $Q(S^n)$ 
   such that $f_i \notin \mathbb{F}$ for all $i=1, \cdots, 2n-2$ and $[f_1, \cdots, f_n]
   \in \mathbb{F},$ then \begin{equation*}x_{f_1, \cdots, f_{2n-2}}=[f_1, \cdots, f_n]\wedge f_{n+1}\wedge \cdots f_{2n-2}$$$$-\sum_{i=1}^n(-1)^{i+n}(f_1 \wedge \cdots \wedge\hat{f_i} \wedge \cdots \wedge f_n)(f_i\wedge f_{n+1}\wedge \cdots \wedge f_{2n-2})
  $$$$ \qquad\qquad\qquad\in \hbox{Ker}(ad)+ U(\hbox{Inder}(S^n)).\end{equation*}
  \end{itemize}

Fix $m \in M $. We have,\begin{equation}0=\rho(x_{f_1, \cdots, f_{2n-2}})( m) =\lambda(1\wedge f_{n+1}\wedge \cdots f_{2n-2})( m)\end{equation}
\begin{equation}\label{above}-\sum_{i=1}^n(-1)^{i+n}\rho(\hbox{ad}(f_1 \wedge \cdots \wedge\hat{f_i} \wedge \cdots \wedge f_n))\rho(\hbox{ad}(f_i\wedge f_{n+1}\wedge \cdots \wedge f_{2n-2}))(m).\end{equation}
Note that (\ref{above}) is in the image of the ideal $Q(S^n)$ by the  $\hbox{ad}$ map acting on $m$. Thus, by Theorem 3 (b),  $\sum_{i=1}^n(-1)^{i+n}\rho(\hbox{ad}(f_1 \wedge \cdots \wedge\hat{f_i} \wedge \cdots \wedge f_n))\rho(\hbox{ad}(f_i\wedge f_{n+1}\wedge \cdots \wedge f_{2n-2}))\cdot m=0$ , from where we deduce that  $\lambda(1\wedge f_{n+1}\wedge \cdots f_{2n-2})\cdot m=0$.

Now suppose $\lambda \neq 0$ with $\lambda : \hbox{ker}(ad)\rightarrow \mathbb{F}$ and let $\beta =\{1, \alpha_1, \alpha_2, \cdots\}$ a basis of $\mathbb{F}[[x_1,\cdots, x_n]].$ Then $\alpha = \{1\wedge \alpha_{i_1}\cdots\wedge \alpha_{i_{n-2}}:\, i_1<\cdots<i_{n-2}\}$ is a basis of $\hbox{Ker}(ad).$ Then there exists $\alpha_{i_1}\cdots \alpha_{i_{n-2}}\in \beta$ such that $\lambda(1\wedge \alpha_{i_1}\wedge \cdots\wedge \alpha_{i_{n-2}})\neq 0. $ Choosing $f_{n+1}, \cdots, f_{2n-2}$ as $\alpha_{i_1}\cdots \alpha_{i_{n-2}}$ for $x_{f_1,\cdots, f_{2n-2}}$ we have that $\lambda(1\wedge f_{n+1}\wedge \cdots\wedge f_{2n-2})\cdot m \neq 0,$ which is a contradiction. Then it follows that $\lambda= 0 $.
\end{remark}

\

Denote $W(m,n)$ the Lie superalgebra of continuous derivations of the
tensor product $\mathbb{F}(m,n)$ of the algebra of formal power
series in $m$ even commuting variables $x_1,\dots, x_m$ and the Grassmann
algebra in $n$ anti-commuting odd variables $\xi_1,\dots,\xi_n$.
Elements of $W(m,n)$ can be viewed as linear {di\-ffe\-ren\-tial} operators
of the form
$$X=\sum_{i=1}^m P_i(x,\xi)\frac{\partial}{\partial x_i}+
\sum_{j=1}^n Q_j(x,\xi)\frac{\partial}{\partial \xi_j}, ~P_i,
Q_j\in\mathbb{F}(m,n).$$ The Lie superalgebra $W(m,n)$ is simple
linearly compact (and it is finite-dimensional if and only if
$m=0$).

Now we shall describe S(m, n) a linearly compact subalgebras
of $W(m,n)$.

First, given a subalgebra $L$ of $W(m,n)$, a continuous linear map\linebreak  $Div: L
\rightarrow \mathbb{F}(m,n)$ is called a divergence if the action
$\pi_{\lambda}$ of $L$ on $\mathbb{F}(m,n)$, given by
$$\pi_{\lambda}(X)f=Xf+(-1)^{p(X)p(f)}\lambda f Div X, \,\, X \in L,$$
is a representation of $L$ in $\mathbb{F}(m,n)$ for any $\lambda\in\mathbb{F}$. Note that
$$S'_{Div}(L):=\{X\in L~|~ Div X=0\}$$
is a closed subalgebra of $L$. We denote by $S_{Div}(L)$ its derived subalgebra
 An example
of a divergence on $L=W(m,n)$ is the following, denoted by $div$:
$$div(\sum_{i=1}^m P_i\frac{\partial}{\partial x_i}+
\sum_{j=1}^n Q_j\frac{\partial}{\partial \xi_j})=
\sum_{i=1}^m \frac{\partial P_i}{\partial x_i}+
\sum_{j=1}^n (-1)^{p(Q_j)} \frac{\partial Q_j}{\partial \xi_j}.$$
Hence for any $\lambda\in\mathbb{F}$ we get the representation $\pi_\lambda$ of
$W(m,n)$ in $\mathbb{F}(m,n)$. Also, we get closed subalgebras $S'_{div}(W(m,n))
\supset S_{div}(W(m,n))$ denoted by $S'(m,n)\supset S(m,n)$. Observe that $S^{\prime}(m,n)=S(m,n)$  is simple if $m >1.$  From now on, we will denoted the Lie algebras $S(n, 0)$ by $S_n.$

Proposition 5.1 in \cite{CK} gives  the
description of the Lie algebra of continuous  derivation of each simple
linearly compact $n$-Lie algebra.
Moreover, they state in particular,  that the Lie algebra of continuous derivations of the
$n$-Lie algebra $ S^n$ is isomorphic to
$S_{n}$ and in the proof of this Proposition, they show
that the Lie algebra of continuous derivations of the $n$-Lie
algebra $ S^n$  coincides with the Lie
algebra of its inner derivations. Thus,
\begin{equation}\label{inder}
\hbox{Inder}(S^n)\simeq S_{n}.
\end{equation}
Therefore, Theorems \ref{Main theorem} and \ref{thm3} and Remark \ref{rmk1} gives us the following.

\begin{theorem}\label{th4} Irreducible representations of the $n$-Lie algebra
$S^n$ are in $1-1$ correspondence with irreducible representations of the
universal enveloping algebra $U(S_{n}),$ on which the two sided
ideal $Q(S^n)$, generated by the elements
$$x_{f_1,\cdots,f_{2n-2}}=\hbox{ad }([f_1,\cdots,f_n]\wedge f_{n+1}\wedge\cdots \wedge f_{2n-2} )$$
\begin{equation*}\label{eq2}-\sum_{i=1}^{n} (-1)^{i+n} \hbox{ad }(f_1\wedge \cdots \wedge \widehat{ f_i}\cdots \wedge f_n)\, \hbox{ad }(f_i\wedge f_{n+1}\wedge\cdots\wedge f_{2n-2})
\end{equation*}
 where $f_i \in \mathbb{F}[[x_1,\cdots, x_n]]$ and $f_i \neq 1$  for all $i=1,\cdots, n,$ acts trivially.
\end{theorem}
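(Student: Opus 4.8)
The plan is to assemble the bijection out of the correspondences already proved, in three passages: between $n$-Lie modules over $S^n$ and modules over $\wedge^{n-1}S^n$, then between those and modules over $\hbox{Inder}(S^n)$, and finally via the identification $\hbox{Inder}(S^n)\simeq S_n$.

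First I would use Theorem \ref{Main theorem} to identify $n$-Lie modules over $S^n$ with representations $(M,\rho)$ of the Lie algebra $\wedge^{n-1}S^n$ on which the two-sided ideal $Q(S^n)$ acts trivially. Part 1) produces the Lie-algebra representation $\rho$ from an $n$-Lie module, and part 2) produces the $n$-Lie module from such a representation. The point that makes these mutually inverse is that, in the proof of Theorem \ref{Main theorem}, the triviality of $Q(S^n)$ is exactly the reformulation of the second Filippov--Jacobi identity (\ref{second Jacobi identity}); since an $n$-Lie module satisfies the full Filippov--Jacobi identity, the representation it produces automatically annihilates $Q(S^n)$. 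By Corollary \ref{corol1}(a) irreducibility is transported along this correspondence, so irreducible $n$-Lie modules over $S^n$ match irreducible representations of $\wedge^{n-1}S^n$ killing $Q(S^n)$.

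Next I would cut $\wedge^{n-1}S^n$ down to its inner derivations using the splitting (\ref{iso1}), $\wedge^{n-1}S^n\simeq\hbox{Ker}(\hbox{ad})\rtimes\hbox{Inder}(S^n)$. Given an irreducible $(M,\rho)$ with $Q(S^n)$ trivial, Theorem \ref{thm3} shows that $\hbox{Ker}(\hbox{ad})$ acts by a scalar $\lambda$, that $\rho|_{\hbox{Inder}(S^n)}$ is irreducible and still kills $Q(S^n)$, and that $\rho=\rho|_{\hbox{Inder}(S^n)}\oplus\lambda\,\mathrm{Id}$; Remark \ref{rmk1}(b) then forces $\lambda=0$. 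Hence $\rho$ annihilates $\hbox{Ker}(\hbox{ad})$ and factors through $\wedge^{n-1}S^n/\hbox{Ker}(\hbox{ad})\simeq\hbox{Inder}(S^n)$. Applying the isomorphism (\ref{inder}), $\hbox{Inder}(S^n)\simeq S_n$, identifies the outcome with an irreducible representation of $U(S_n)$ on which $Q(S^n)$ acts trivially.

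The main obstacle will be verifying that this assignment is a genuine bijection, that is, the reverse passage. Starting from an irreducible representation of $S_n\simeq\hbox{Inder}(S^n)$ killing $Q(S^n)$, one inflates it to $\wedge^{n-1}S^n$ by declaring $\hbox{Ker}(\hbox{ad})$ to act as zero; because of the semidirect decomposition (\ref{iso1}) and the fact, recorded before (\ref{iso1}), that $\hbox{Ker}(\hbox{ad})$ is an abelian ideal, this zero extension is again a Lie-algebra representation, it remains irreducible (since $\hbox{Ker}(\hbox{ad})$ contributes nothing to invariant subspaces), and $Q(S^n)$ continues to act trivially. Theorem \ref{Main theorem}(2) then returns an $n$-Lie module, and Corollary \ref{corol1}(a) guarantees it is irreducible. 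Checking that the two passages are inverse to each other — in particular that $\lambda=0$ is the only scalar compatible with the triviality of $Q(S^n)$, which is precisely the content of Remark \ref{rmk1}(b) — is the single place where real argument rather than bookkeeping is required; the remainder is an assembly of the cited results.
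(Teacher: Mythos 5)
Your proposal is correct and follows exactly the paper's route: the paper derives this theorem by simply assembling Theorem \ref{Main theorem}, Theorem \ref{thm3}, Corollary \ref{corol1}, Remark \ref{rmk1}(b) (forcing $\lambda=0$), and the identification $\hbox{Inder}(S^n)\simeq S_n$ of (\ref{inder}). Your write-up just makes explicit the bookkeeping (in particular the inverse passage via zero-extension over $\hbox{Ker}(\hbox{ad})$) that the paper leaves implicit.
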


\section{Representations of simple linearly compact Lie superalgebra $S_{n}.$}

In this section we present the approach given by A.
Rudakov in \cite{R} for the representation theory
of the infinite-dimensional simple linearly compact Lie algebra
$S_{n}$.
The algebra $S_n$ is a subalgebra of the algebra $W_n$ of all derivations of the ring $\mathbb{F}$ of formal power series in $n$ variables. The elements $D \in W_{n}$ has the form $D=\displaystyle{\sum_{i=1}^{n} f_i \partial/\partial x_i}$ with $f_i \in \mathbb{F}[[x_1,\cdots,x_{n}]].$
The algebra $W_n$ is endowed with the filtration
$$(W_{n})_{(j)}=\{D,\, \hbox{ deg }f_i \geq j+1 \}$$
and a compatible gradation
$$(W_{n})_{j}=\{D,\, \hbox{ deg }f_i= j+1 \}.$$
The subalgebra $S_n$ is defined by the condition 
$$\sum_{i=1}^{n}\frac{\partial f_i}{\partial x_i}=0.$$
The filtration and gradation of $W_n$ induce a filtration and gradation in $S_n.$
The gradation of $S_n$ gives a triangular decomposition
\begin{equation*}
S_{n}= (S_{n})_- \oplus (S_{n})_0 \oplus (S_{n})_+,\qquad
\end{equation*}
\hbox{ with } $(S_{n})_\pm=\oplus_{\pm m>0} (S_{n})_m.$
We shall consider continuous representations in spaces with discrete
{to\-po\-lo\-gy}. The continuity of a representation of a {li\-near\-ly}
compact Lie superalgebra $S_{n}$  in a vector space $V$ with
discrete topology means that the stabilizer $(S_{n})_v=\{ g\in
S_{n}  |\, gv=0\}$ of any $v\in V$ is an open (hence of finite
codimension) subalgebra of $S_{n}$. Let $(S_{n})_{\geq
0}=(S_{n})_{
> 0} \oplus (S_{n})_0$. Denote by $P(S_{n}, (S_{n})_{\geq 0})$ the
category  of all continuous $S_{n}$-modules $V$, where $V$ is a
vector space with discrete topology, that are $(S_{n})_0$-locally
finite, that is any $v\in V$ is contained in a finite-dimensional
$(S_{n})_0$-invariant subspace. Given an $(S_{n})_{\geq 0}$-module $F$,
we may consider the {a\-sso\-cia\-ted} induced $S_{n}$-module
\begin{displaymath}
M(F)=\hbox{ Ind}^{S_{n}}_{(S_{n})_{\geq 0}}
F=U(S_{n})\otimes_{U((S_{n})_{\geq 0})} F
\end{displaymath}
called the {\it generalized Verma module} associated to $F$.

 Let $V$ be an $S_{n}$-module. The elements of the subspace
\begin{displaymath}
\hbox{ Sing}(V):=\{ v\in V|\, (S_{n})_{>0} v=0\}
\end{displaymath}
are called {\it singular vectors}. When $V=M(F)$, the $(S_{n})_{\geq0}$-module $F$ is canonically an
$(S_{n})_{\geq0}$-submodule of $M(F)$, and Sing$(F)$ is a subspace
of Sing$(M(F))$, called the {\it subspace of trivial singular
vectors}. Observe that $M(F)= F\oplus F_+$, where
$F_+=U_+((S_{n})_-)\otimes F$ and $U_+((S_{n})_-)$ is the
augmentation ideal in the symmetric algebra $U((S_{n})_-)$. Then
\begin{displaymath}
\hbox{ Sing}_+(M(F)):=\hbox{ Sing}(M(F))\cap F_+
\end{displaymath}
are called the {\it non-trivial singular vectors}.

\begin{theorem}
\label{prop6} \cite{KR1}\cite{R}
 (a) If $F$ is a finite-dimensional
$(S_{n})_{\geq0}$-module, then  $M(F)$ is in $P(S_{n},(S_{n})_{\geq
0})$.

(b) In any irreducible finite-dimensional $(S_{n})_{\geq0}$-module
$F$ the subalgebra $(S_{n})_+$ acts trivially.

(c) If $F$ is an irreducible finite-dimensional
$(S_{n})_{\geq0}$-module, then $M(F)$ has a unique maximal submodule.

(d) Denote by I(F) the quotient  by the unique maximal submodule of
$M(F)$. Then the map $F\mapsto I(F)$ defines a bijective
correspondence between irreducible finite-dimensional
$(S_{n})_{\geq0}$-modules and irreducible $(S_{n})$-modules in
$P((S_{n}),(S_{n})_{\geq0})$, the inverse map being $V\mapsto $Sing$(V)$.

(e) An $(S_{n})_{\geq0}$-module $M(F)$ is irreducible if and only if the
$(S_{n})_{\geq0}$-module $F$ is irreducible and $M(F)$ has no
non-trivial singular vectors.
\end{theorem}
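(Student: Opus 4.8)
The plan is to treat the five parts as consequences of the standard machinery of induced (generalized Verma) modules for the $\mathbb{Z}$-graded Lie algebra $S_{n}=(S_{n})_{-}\oplus (S_{n})_{0}\oplus (S_{n})_{+}$, exploiting that here the negative part sits entirely in degree $-1$. Thus $(S_{n})_{-}=(S_{n})_{-1}$ is abelian of dimension $n$, and by PBW $U((S_{n})_{-})$ is the polynomial algebra $\mathbb{F}[\partial_{1},\dots,\partial_{n}]$, so $M(F)\cong U((S_{n})_{-})\otimes F$ as a graded space, with $F$ placed in degree $0$ and homogeneous components $M(F)_{-k}\cong S^{k}((S_{n})_{-1})\otimes F$ that are finite-dimensional and $(S_{n})_{0}$-stable. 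For (a) I would read off $(S_{n})_{0}$-local finiteness directly from this decomposition, and continuity from the fact that the grading is bounded above by $0$: a homogeneous vector of degree $d$ is killed by every element of $(S_{n})_{m}$ with $m>-d$, hence by the open subalgebra $(S_{n})_{(N)}$ for $N$ large, so each stabilizer has finite codimension.

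For (b) I would use that $(S_{n})_{+}$ is an ideal of $(S_{n})_{\geq 0}$, so $(S_{n})_{+}F$ is an $(S_{n})_{\geq 0}$-submodule of $F$ and, by irreducibility, equals $0$ or $F$. To exclude the second case I would invoke continuity of $F$: since $F$ is finite-dimensional, the intersection of the stabilizers of a basis contains some $(S_{n})_{(N)}$, so $(S_{n})_{m}$ acts by $0$ for $m\geq N$. Then the lower central series of $(S_{n})_{+}$ lands in ever higher filtration, so its image in $\mathrm{End}(F)$ is nilpotent; by Engel's theorem $F^{(S_{n})_{+}}=\{v:(S_{n})_{+}v=0\}$ is nonzero, and it is $(S_{n})_{\geq 0}$-invariant because $(S_{n})_{+}$ is an ideal. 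Irreducibility then forces $F^{(S_{n})_{+}}=F$, i.e. $(S_{n})_{+}$ acts trivially; in particular $F$ is just an irreducible $(S_{n})_{0}$-module and $\mathrm{Sing}(F)=F$.

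For (c)--(e) I would work with singular vectors and the splitting $M(F)=F\oplus F_{+}$. The first observation is that any proper submodule $N$ meets $F$ trivially, since $N\cap F$ is an $(S_{n})_{\geq 0}$-submodule of the irreducible $F$ and $N\cap F=F$ would give $F\subseteq N=M(F)$. Granting that every nonzero submodule is graded -- so that, the grading being bounded above, its top homogeneous component consists of singular vectors -- any proper $N$ has $N_{0}=N\cap F=0$ and is therefore contained in $F_{+}$; the sum $M_{\max}$ of all proper submodules then lies in $F_{+}$, is itself proper, and is the unique maximal submodule, proving (c). For (d) I would check that $I(F)=M(F)/M_{\max}$ is irreducible and stays in $P(S_{n},(S_{n})_{\geq 0})$, that $\mathrm{Sing}(I(F))=F$ (a surviving non-trivial singular vector would generate a proper nonzero submodule of the irreducible $I(F)$), and conversely that for irreducible $V\in P$ the space $\mathrm{Sing}(V)$ is a nonzero irreducible finite-dimensional $(S_{n})_{\geq 0}$-module with $V\cong I(\mathrm{Sing}(V))$ via the canonical surjection $M(\mathrm{Sing}(V))\to V$ supplied by Frobenius reciprocity; this makes $F\mapsto I(F)$ and $V\mapsto \mathrm{Sing}(V)$ mutually inverse. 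Finally (e) follows because $M_{\max}$ is nonzero exactly when it contains a nonzero singular vector, that is exactly when $\mathrm{Sing}_{+}(M(F))\neq 0$, while irreducibility of $F$ rules out submodules already coming from $F$ itself.

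The main obstacle is the structural input flagged parenthetically above: that nonzero submodules of $M(F)$, and nonzero objects of $P$, contain nonzero singular vectors, and more basically that one may argue degree by degree. Because the grading of $S_{n}$ is induced from $W_{n}$ and is \emph{not} inner -- the Euler operator $\sum_{i}x_{i}\partial_{i}$ has nonzero divergence, so it does not lie in $S_{n}$, and $(S_{n})_{0}=\mathfrak{sl}_{n}$ contains no grading element -- gradedness of submodules is not automatic from the $S_{n}$-action alone. I would resolve this as Rudakov does: introduce the degree operator on $M(F)$ coming from the $W_{n}$-grading together with the local nilpotency of $(S_{n})_{>0}$ on objects of $P$ (guaranteed by continuity and the upper bound on degrees), which lets one extract a singular vector in the top available degree of any nonzero submodule. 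With that lemma in hand the deductions in (c)--(e) are routine, and this is where I expect the real work to lie.
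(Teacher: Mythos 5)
This theorem is not proved in the paper at all: it is imported verbatim from \cite{KR1} and \cite{R}, so there is no in-paper argument to compare against and your reconstruction has to stand on its own. It does not, in two places.

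The clearest gap is in your part (b). You argue that the image of $(S_{n})_{+}$ in $\mathrm{End}(F)$ is a nilpotent \emph{Lie algebra} (true, since some $(S_{n})_{(N)}$ acts by zero and the quotient is graded in finitely many positive degrees) and then invoke Engel's theorem. But Engel's hypothesis is that each \emph{operator} is nilpotent, not that the Lie algebra is nilpotent; a nilpotent Lie algebra of operators need have no common null vector (the scalars $\mathbb{F}\,\mathrm{id}$ are an abelian counterexample). Indeed the statement you want is false for a general graded Lie algebra: take $\mathfrak{g}_{0}=0$, $\mathfrak{g}_{1}=\mathbb{F}x$ abelian, and let $x$ act by $1$ on a one-dimensional module. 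What saves $S_{n}$ is the structural fact that $[(S_{n})_{0},(S_{n})_{j}]=(S_{n})_{j}$ for $j\geq 1$ (the $\mathfrak{sl}_{n}$-module $(S_{n})_{j}$ has no trivial summands). With that, either apply Lie's theorem to the solvable ideal $\mathfrak{n}=\mathrm{im}\,(S_{n})_{+}$ and the invariance lemma, so that $\mathfrak{n}$ acts by a character $\lambda$ which must vanish on $[\mathrm{im}\,(S_{n})_{0},\mathfrak{n}]=\mathfrak{n}$; or note that $\mathfrak{n}\subseteq[\bar{\mathfrak{g}},\mathrm{rad}\,\bar{\mathfrak{g}}]$ lies in the nilradical, whose elements do act nilpotently, and only then use Engel. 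Your argument never uses this property of $S_{n}$, so it cannot be complete.

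The second problem is in (c)--(e). You correctly identify the crux --- the grading of $S_{n}$ is not inner, so submodules of $M(F)$ are not automatically graded --- but you then defer its resolution to ``as Rudakov does,'' and the sketch you give in its place does not close the gap. Local nilpotency of $(S_{n})_{+}$ on $M(F)$ (degrees bounded above) does produce a nonzero singular vector in any nonzero submodule $N$; the trouble is that when $N$ is not graded this singular vector may have a nonzero component in $F=M(F)_{0}$, and then your chain ``proper $\Rightarrow N\cap F=0 \Rightarrow N\subseteq F_{+} \Rightarrow$ the sum of proper submodules is proper'' breaks at the second implication: $N\cap F=0$ only says $N$ contains no vector lying entirely in $F$, not that $N\subseteq F_{+}$, and without that containment the sum of two proper submodules could a priori meet $F$. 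This is precisely the point where Rudakov's papers for types $S$ and $H$ diverge from the easy $W_{n}$ case (where the Euler operator forces every submodule to be graded), so it is the actual content of the cited theorem rather than a routine step; as written, your proposal proves (a), gives an invalid argument for (b), and assumes the essential lemma in (c)--(e).
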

\begin{remark}\label{rmk3} (a) Note that
\begin{equation}(S_{n})_0\cong \frak
sl_{n}(\mathbb{F}),\end{equation}  the isomorphism is given by the
map that sends $x_i \frac{\partial}{\partial x_j} \rightarrow E_{i,j}$ for ($i\neq j$) and $x_i \frac{\partial}{\partial x_i}- x_{i+1} \frac{\partial}{\partial x_{i+1}}\rightarrow E_{i,i}-E_{i+1, i+1},$ where
$E_{i,j}$ denote as  usual the matrix whose $(i,j)$ entry is $1$
and all the other entries are $0$ for  $i, j =1, \cdots, n.$

(b) Due to Theorem \ref{prop6} part b)   any irreducible finite dimensional
$(S_{n})_{\geq0}$-module $F$ will be obtained extending by zero
the irreducible finite dimensional $\frak {sl}_{n}(\mathbb{F})$- module.

\end{remark}
 In the
Lie algebra $\frak{sl}_{n}(\mathbb{F})$ we choose the Borel subalgebra \linebreak$\frak b=
\{ x_i\frac{\partial}{\partial x_i}-x_j\frac{\partial}{ \partial x_j}, \, x_i\frac{\partial}{\partial x_j} : i<j,\, i, j =1,\cdots, n\}.$ We
denote by
$$\frak h=\hbox{span}\{h_i=x_i
\frac{\partial}{ \partial x_i}-x_{i+1}
\frac{\partial}{ \partial x_{i+1}}, \, i=1,\cdots, n-1\} $$ the corresponding Cartan subalgebra.

Let $F_0, \cdots F_{n-1}$ be the irreducible $(S_{n})_{\geq
0}$-modules irreducibles obtained by extending trivially the irreducible
$\frak {sl}_{n}(\mathbb{F})$-modules with highest weight $\lambda_0=(0, 0\cdots,
0),$ $\lambda_1=(1,0, \cdots,  0),$ $\lambda_2=(0,1,\cdots,0),\cdots \lambda_{n-1}=(0,0, \cdots,
 1) $ respectively. We will call them
\textit{exceptional }\,$(S_{n})_{\geq 0}$-modules.

\begin{theorem}\cite{R}\label{th6} Let $F$ be an irreducible finite dimensional $\frak
{sl}_{n}(\mathbb{F})$-module. If $F$ is not isomorphic to one of the exceptional
modules $F_0, \cdots F_{n-1}$ then the $S_{n}$-module $M(F)$ is
irreducible. Each module $N_p:=M(F_p)$ contains a unique irreducible
submodule $K_p$ which is generated by all its non-trivial singular
vectors.
\end{theorem}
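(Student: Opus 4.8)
The plan is to reduce both assertions to a single computation of singular vectors and then feed the outcome into the formal machinery of Theorem~\ref{prop6}. Since $F$ is irreducible by hypothesis, Theorem~\ref{prop6}(e) says that $M(F)$ is irreducible precisely when it has no non-trivial singular vectors, so for the first assertion it suffices to prove that $M(F)$ has no non-trivial singular vectors when $F$ is not exceptional. First I would make the grading explicit. As $S_{n}$ has depth one, $(S_{n})_-=(S_{n})_{-1}=\mathrm{span}\{\partial_1,\dots,\partial_n\}$ is abelian and $U((S_{n})_-)$ is the polynomial algebra on the $\partial_i$, so by PBW $M(F)\cong\bigoplus_{k\ge 0}S^{k}((S_{n})_{-1})\otimes F$ as a graded $(S_{n})_0$-module, where $(S_{n})_0\cong\mathfrak{sl}_{n}$ acts on $(S_{n})_{-1}$ as (the dual of) the standard module. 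A non-trivial singular vector lives in $F_+$, i.e. in a homogeneous piece of degree $-k$ with $k\ge 1$.

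Next I would use two structural facts: that $(S_{n})_{>0}$ is generated as a Lie algebra by $(S_{n})_1$, and that $(S_{n})_{>0}$ acts by zero on $F$ (Theorem~\ref{prop6}(b) and Remark~\ref{rmk3}(b)). The first reduces the condition $(S_{n})_{>0}w=0$ to $(S_{n})_1 w=0$; the second means that for homogeneous $w$ the action of $X\in(S_{n})_1$ is governed entirely by the brackets $[X,\partial_i]\in(S_{n})_0$ acting in the $F$-slot. Because $(S_{n})_0$ normalises $(S_{n})_1$, the space $\mathrm{Sing}_{-k}$ of singular vectors of degree $-k$ is an $\mathfrak{sl}_{n}$-submodule of $S^{k}((S_{n})_{-1})\otimes F$; hence it is non-zero iff it contains an $\mathfrak{sl}_{n}$-highest weight vector annihilated by $(S_{n})_1$. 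This turns the problem into a finite, weight-by-weight linear problem.

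The heart of the argument, and the step I expect to be the main obstacle, is solving this linear problem. Writing $X=\sum_j g_j\partial_j\in(S_{n})_1$ with $g_j$ quadratic and $\sum_j\partial_j g_j=0$, one computes $[X,\partial_i]=-\sum_j(\partial_i g_j)\partial_j\in\mathfrak{sl}_{n}$, so for $w=\sum_i\partial_i\otimes f_i$ in degree $-1$ the singular condition becomes the system $\sum_i\,[X,\partial_i]\cdot f_i=0$ in $F$, for all admissible $X$. Specialising $X$ to the divergence-free fields $x_a^2\partial_c$, $x_ax_b\partial_c$ and $x_ax_c\partial_c-x_ax_d\partial_d$ yields explicit $\mathfrak{sl}_{n}$-covariant equations on the tuple $(f_1,\dots,f_n)$, which, together with the irreducibility of $F$ and highest-weight considerations, can only be solved by distinguished covariants. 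The analogous computation in degree $-k$ is bookkeeping-heavy but of the same nature; the upshot I expect is that non-zero non-trivial singular vectors occur exactly when $F\cong F_p$ for some $p$, that they realise the de~Rham/Koszul differential, and that for each such $p$ they span a single irreducible $\mathfrak{sl}_{n}$-module concentrated in one degree (with the fact that $S_{n}$ preserves the volume form, so $\Lambda^nV\cong\Lambda^0V$, accounting for the wrap-around that places the singular vectors of $M(F_0)$ in higher degree). This proves the first assertion, since for non-exceptional $F$ the system has only the zero solution in every degree.

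For the second assertion I argue formally from this computation. Let $K_p\subseteq N_p=M(F_p)$ be the submodule generated by all non-trivial singular vectors; the computation shows $\mathrm{Sing}_+(N_p)$ is a single irreducible $(S_{n})_{\ge 0}$-module (an exceptional $\mathfrak{sl}_{n}$-module on which $(S_{n})_+$ acts by zero), and since it lies in $F_+$ the submodule $K_p$ is non-zero and proper. As $(S_{n})_{>0}$ raises degree and the grading is bounded above, it acts locally nilpotently on $N_p$; a standard argument then shows that every non-zero submodule of $N_p$ contains a non-trivial singular vector, while any non-zero submodule meeting $F$ must be all of $N_p$. Hence for any non-zero proper submodule $N$ there is a non-trivial singular vector $v_0\in N$, and applying $U((S_{n})_{\ge 0})$ to $v_0$ recovers, by irreducibility of $\mathrm{Sing}_+(N_p)$ as an $(S_{n})_{\ge 0}$-module, all of $\mathrm{Sing}_+(N_p)\subseteq N$, so $K_p\subseteq N$. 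Thus $K_p$ is contained in every non-zero submodule of $N_p$; being itself non-zero it is the unique minimal submodule, hence irreducible, and is the unique irreducible submodule of $N_p$, generated by its non-trivial singular vectors, as claimed.
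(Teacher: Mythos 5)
A preliminary remark: the paper itself does not prove this statement --- it is quoted from Rudakov \cite{R} --- so there is no in-paper proof to compare yours against, and I judge your argument on its own terms. Your setup is correct and is indeed the standard route: by Theorem \ref{prop6}(e) the first assertion reduces to showing $\hbox{Sing}_+(M(F))=0$ for non-exceptional $F$; the identification of $M(F)$ with $\bigoplus_{k\geq 0}S^k\bigl((S_{n})_{-1}\bigr)\otimes F$, the reduction of the singular condition to $(S_{n})_1w=0$ (granting the standard fact that $(S_{n})_{>0}$ is generated by $(S_{n})_1$), and the degree $-1$ equations $\sum_i[X,\partial_i]\cdot f_i=0$ are all right. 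But at the step you yourself call the heart of the argument, the proof stops: ``the upshot I expect is that non-zero non-trivial singular vectors occur exactly when $F\cong F_p$ \ldots and span a single irreducible $\mathfrak{sl}_{n}$-module'' is precisely the content of the theorem, asserted rather than derived. No argument is given that for non-exceptional $F$ the degree $-1$ system has only the zero solution; for degrees $-k$ with $k\geq 2$ the equations are not even written down; and the identification of $\hbox{Sing}_+(N_p)$ as a single irreducible module is taken on faith. Since both assertions rest entirely on this classification, what you have is a correct plan, not a proof.

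Separately, the formal deduction of the second assertion from the (assumed) computation has its own gap. The Engel-type argument does produce a non-zero singular vector $w\in N$ for every non-zero submodule $N\subseteq N_p$, but $w=w_0+w_+$ with $w_0\in 1\otimes F_p$ and $w_+\in (F_p)_+$, and since every vector of $1\otimes F_p$ is annihilated by $(S_{n})_{>0}$ (Theorem \ref{prop6}(b)), singularity of $w$ only says that $w_+$ is singular; it does not place the non-trivial singular vector $w_+$ inside $N$. You cannot repair this by passing to homogeneous components within $N$, because submodules of $M(F)$ need not be graded: the grading of $S_{n}$ is not inner --- the would-be grading element $\sum_i x_i\partial_i$ has divergence $n\neq 0$, hence does not lie in $S_{n}$ --- and $\mathfrak{sl}_{n}$-weights do not separate the graded pieces either, since $\varepsilon_1+\cdots+\varepsilon_n=0$ makes weights recur across degrees (this is exactly the wrap-around you allude to). So the chain ``$N$ non-zero and proper $\Rightarrow$ $N$ contains a non-trivial singular vector $\Rightarrow K_p\subseteq N$'' requires an additional device (for instance passing to the associated graded module of the depth filtration and lifting back), and as written the minimality, irreducibility and uniqueness of $K_p$ do not follow.
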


\begin{corollary}\cite{R} \label{corol3} If the $S_{n}$-module $E$ is irreducible, then
the $\frak{sl}_{n}(\mathbb{F})$-module $F:=\hbox{Sing}(E)$ is also
irreducible. If $F$ coincides with none of the {mo\-du\-les}
$F_0,\cdots , F_{n-1},$ then $E=M(F).$ If $F=F_p,$ then $E$ is
isomorphic to $J(F_p):=N_p/K_p$.
\end{corollary}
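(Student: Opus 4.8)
The plan is to derive all three assertions from the bijection of Theorem~\ref{prop6}(d) together with the structural description of $M(F)$ supplied by Theorem~\ref{th6}, treating the non-exceptional and exceptional cases separately. Throughout I take for granted the standing hypothesis that $E$ lies in $P(S_{n},(S_{n})_{\geq 0})$, so that Theorem~\ref{prop6} is applicable. Since $E$ is irreducible, part (d) of that theorem — whose inverse map is $V\mapsto\hbox{Sing}(V)$ — shows at once that $F:=\hbox{Sing}(E)$ is an irreducible finite-dimensional $(S_{n})_{\geq 0}$-module and that $I(F)=E$. By Theorem~\ref{prop6}(b) the subalgebra $(S_{n})_+$ acts trivially on $F$, so by Remark~\ref{rmk3}(b) $F$ is an irreducible finite-dimensional $\frak{sl}_{n}(\mathbb{F})$-module extended by zero. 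This is the first assertion.

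Next I would dispose of the generic case. Suppose $F$ is isomorphic to none of the exceptional modules $F_0,\dots,F_{n-1}$. Then Theorem~\ref{th6} gives that $M(F)$ is already irreducible, so the unique maximal submodule of $M(F)$ from Theorem~\ref{prop6}(c) is $0$ and hence $I(F)=M(F)$. Combining this with the identity $E=I(F)$ from the previous step yields $E=M(F)$, which is the second assertion.

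Finally, assume $F=F_p$. Here $E=I(F_p)=N_p/\hbox{Max}$, where $\hbox{Max}$ denotes the unique maximal submodule of $N_p=M(F_p)$, and the whole task reduces to proving $\hbox{Max}=K_p$. One inclusion is formal: since $F_p$ is exceptional, $N_p$ is reducible, so $K_p\neq N_p$, and being a proper submodule, $K_p$ is contained in the maximal one, $K_p\subseteq\hbox{Max}$. For the reverse inclusion I would use that $\hbox{Max}$ is generated by the non-trivial singular vectors it contains: every proper submodule lies in $F_+$, and a grading argument shows that the top (least negative) component of such a submodule is annihilated by $(S_{n})_{>0}$, hence consists of non-trivial singular vectors. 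By Theorem~\ref{th6} all non-trivial singular vectors of $N_p$ lie in $K_p$, so $\hbox{Max}\subseteq K_p$. This forces $\hbox{Max}=K_p$, and therefore $E=N_p/K_p=J(F_p)$.

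The main obstacle is precisely this last identification $\hbox{Max}=K_p$. The inclusion $K_p\subseteq\hbox{Max}$ costs nothing, but the reverse inclusion requires the genuinely structural fact that the maximal submodule of a generalized Verma module is generated by its non-trivial singular vectors; the degree argument only delivers that the top component is singular, and promoting this to a set of generators is where Rudakov's analysis is really used. This fact is exactly what underlies the equivalence in Theorem~\ref{prop6}(e) between irreducibility of $M(F)$ and the absence of non-trivial singular vectors, and once it is available the length-two structure of $M(F_p)$ — with $K_p$ being simultaneously the unique irreducible submodule and the unique maximal submodule — is immediate.
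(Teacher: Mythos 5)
First, a point of comparison: the paper itself does not prove this corollary --- it is quoted from Rudakov \cite{R}, and the structural facts it encodes are part of what is being cited. So the real question is whether your derivation is self-contained given only Theorems \ref{prop6} and \ref{th6}. For the first two assertions it is, and your argument is correct: Theorem \ref{prop6}(d) gives that $F=\hbox{Sing}(E)$ is an irreducible finite-dimensional $(S_{n})_{\geq 0}$-module with $E=I(F)$; Theorem \ref{prop6}(b) together with Remark \ref{rmk3}(b) identifies $F$ with an irreducible $\frak{sl}_{n}(\mathbb{F})$-module; and in the non-exceptional case Theorem \ref{th6} makes $M(F)$ irreducible, so its unique maximal submodule is zero and $E=I(F)=M(F)$.

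The exceptional case contains a genuine gap, exactly at the point you flag, and it cannot be closed the way you suggest. Writing $\hbox{Max}$ for the unique maximal submodule of $N_p$, the inclusion $K_p\subseteq \hbox{Max}$ is indeed formal, but $\hbox{Max}\subseteq K_p$ is not, and your justification --- that the maximal submodule is generated by the non-trivial singular vectors it contains, a fact you claim ``underlies'' Theorem \ref{prop6}(e) --- misidentifies what (e) requires. Part (e) needs only the weaker statement that every nonzero proper submodule of $M(F)$ contains a nonzero non-trivial singular vector; that weaker statement is fully consistent with a length-three picture $0\subset K_p\subset \hbox{Max}\subset N_p$ in which all non-trivial singular vectors of $N_p$ lie in $K_p$ yet $\hbox{Max}$ is strictly larger. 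Nothing quoted in the paper rules this out: the irreducibility of $N_p/K_p$ (equivalently $\hbox{Max}=K_p$) is additional content of Rudakov's analysis of the exceptional modules, obtained from their explicit (de Rham--complex type) structure, and this is precisely why the paper cites \cite{R} for the corollary instead of deriving it from the two theorems. There is also a technical hole one level down: your grading argument (proper submodules lie in $F_+$, and the top component of such a submodule consists of singular vectors) presupposes that submodules of $M(F)$ are graded. For $W_n$ this is automatic because the Euler operator $\sum_i x_i\,\partial/\partial x_i$ lies in the degree-zero component; for $S_n$ the degree-zero component is $\frak{sl}_{n}(\mathbb{F})$, which contains no grading element, so even the containment $\hbox{Max}\subseteq F_+$ (as opposed to $\hbox{Max}\cap F=0$, which is formal) needs a separate argument.
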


\subsection{Some useful lemmas}
 Let $F$ be an irreducible finite dimensional $\frak{sl}_{n}(\mathbb{F})$-module with highest weight vector $v_{\lambda}$ and highest weight $\lambda.$ Let $J(F)=M(F)/\hbox{ Sing}_+(M(F)).$

Our main goal is to find those  irreducible finite dimensional \linebreak
$\frak{sl}_{n}(\mathbb{F})$-modules $F$ for which $J(F)$ is  an irreducible module
over the $n$-Lie algebra $S^n$, more precisely, we are looking for those $J(F)$ where the ideal $Q(S^n)$ acts trivially.
\begin{lemma}\label{lem3}$Q(S^n)\otimes_{U(S_{n})_{\geq 0}} F \subset \hbox{ Sing}_+(M(F)) $
 if and only if $Q(S^n)$ acts trivially on $\,J(F).$\end{lemma}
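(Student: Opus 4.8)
The plan is to reduce the question of how $Q(S^n)$ acts on the whole of $M(F)$ to how it acts on the bottom layer $F$, using two facts: that $M(F)=U(S_{n})\otimes_{U((S_{n})_{\geq 0})}F$ is generated as a $U(S_{n})$-module by $1\otimes F$, and that $Q(S^n)$ is a two-sided (in particular, right) ideal of $U(S_{n})$. With these, the statement should fall out formally once the definitions are unwound.

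The one step carrying real content is the identity of subspaces of $M(F)$
$$Q(S^n)\cdot M(F)=Q(S^n)\otimes_{U((S_{n})_{\geq 0})}F,$$
where the right-hand side is read as the image of $Q(S^n)\otimes F$ in $M(F)$, i.e. $Q(S^n)\cdot(1\otimes F)$. The inclusion $\supseteq$ is immediate by taking $u=1$. For $\subseteq$, any element of $M(F)$ is a sum of terms $u\cdot(1\otimes f)=u\otimes f$ with $u\in U(S_{n})$, $f\in F$; for $q\in Q(S^n)$ one computes $q\cdot(u\otimes f)=(qu)\otimes f$, and since $Q(S^n)$ is a right ideal, $qu\in Q(S^n)$, so $(qu)\otimes f\in Q(S^n)\cdot(1\otimes F)$. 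Hence the two subspaces coincide. This is the only place the ideal property intervenes, and it is exactly what keeps the action of $Q(S^n)$ from leaving the span of $Q(S^n)\cdot F$.

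Granting the displayed identity, the lemma is formal. By definition $Q(S^n)$ acts trivially on $J(F)=M(F)/\hbox{Sing}_+(M(F))$ precisely when $Q(S^n)\cdot M(F)\subseteq\hbox{Sing}_+(M(F))$; substituting the identity, this is equivalent to $Q(S^n)\otimes_{U((S_{n})_{\geq 0})}F\subseteq\hbox{Sing}_+(M(F))$, which is the asserted condition. If I wanted to avoid invoking the identity, I would argue the two implications separately: for the forward direction, triviality on $J(F)$ forces $q\cdot(1\otimes f)\in\hbox{Sing}_+(M(F))$ for all $q\in Q(S^n)$ and $f\in F$, giving the inclusion at once; for the converse, the identity shows that every $q\cdot m$ already lies in $Q(S^n)\otimes_{U((S_{n})_{\geq 0})}F\subseteq\hbox{Sing}_+(M(F))$, so the induced action on the quotient vanishes.

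I do not expect a serious obstacle here, since the computation is bookkeeping with the induced-module structure. The point needing the most care is interpretive rather than technical: one must read $\hbox{Sing}_+(M(F))$ as the submodule by which $M(F)$ is divided to form $J(F)$ (equivalently $K_p$ when $F=F_p$), so that triviality of the $Q(S^n)$-action on $J(F)$ is literally the containment $Q(S^n)\cdot M(F)\subseteq\hbox{Sing}_+(M(F))$. Once that convention is fixed, the proof consists exactly of the two observations above.
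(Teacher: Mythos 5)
Your proof is correct and follows the same overall strategy as the paper's: both directions come down to transferring the action of $Q(S^n)$ between all of $M(F)$ and the bottom layer $1\otimes F$ via the ideal property of $Q(S^n)$, and your forward direction is identical to the paper's. The converse is where you diverge. The paper reduces to showing $U(S_{n})Q(S^n)\otimes_{U((S_{n})_{\geq 0})}F\subset \hbox{Sing}_+(M(F))$, observes that $U(S_{n})\bigl(Q(S^n)\otimes_{U((S_{n})_{\geq 0})}F\bigr)$ is the submodule generated by $Q(S^n)\otimes_{U((S_{n})_{\geq 0})}F$, and concludes by containing this generated submodule in $\hbox{Sing}_+(M(F))$ viewed as the distinguished submodule of $M(F)$ (the $K_p$ of Theorem~\ref{th6}); this leans on left multiplication by $U(S_{n})$ and on the structure results quoted from Rudakov. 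You instead exploit the right-ideal property, $qu\in Q(S^n)$ for $q\in Q(S^n)$ and $u\in U(S_{n})$, to obtain the exact identity $Q(S^n)\cdot M(F)=Q(S^n)\otimes_{U((S_{n})_{\geq 0})}F$ inside $M(F)$, after which both implications are immediate substitutions; no submodule-generation step and no appeal to Theorem~\ref{th6} or Corollary~\ref{corol3} is needed beyond the convention (which you correctly flag) that $J(F)$ is the quotient of $M(F)$ by the submodule generated by the nontrivial singular vectors. Your route is the more economical one: it isolates the single computation $q\cdot(u\otimes f)=(qu)\otimes f$ as the entire content of the lemma, whereas the paper's phrasing in terms of generated submodules matches how $\hbox{Sing}_+(M(F))$ is handled later in the proof of the main theorem. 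Both arguments are complete.
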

 \begin{proof} Suppose $Q(S^n)$ acts trivially on $J(F).$ Note that by Theorem \ref{th6} and Corollary \ref{corol3} this means
 that
 $Q(S^n)\cdot(U(S_{n})\otimes_{U(S_{n})_{\geq 0}} F) \subset \hbox{
 Sing}_+(M(F)).$ In particular,
$ Q(S^n) \otimes_{U(S_{n})_{\geq
 0}}F\subseteq \hbox{Sing}_{+}M(F)$.

Conversely,  if $Q(S^n)\otimes_{U(S_{n})_{\geq 0}} F \subset \hbox{
 Sing}_+(M(F)),$ it is enough to show that  $U(S_{n})Q(S^n)\otimes_{U(S_{n})_{\geq 0}} F \subset \hbox{Sing}_{+}(M(F)).$

 Note that $U(S_{n})(Q(S^n)\otimes_{U(S_{n})_{\geq 0}} F)$ is the submodule generated by $Q(S^n)\otimes_{U(S_{n})_{\geq 0}} F$
 and $Q(S^n)\otimes_{U(S_{n})_{\geq 0}} F \subset \hbox{
 Sing}_+(M(F))$ by hypothesis, thus we have
 $U(S_{n})Q(S^n)\otimes_{U(S_{n})_{\geq 0}} F $ is a submodule  of the unique irreducible submodule $\hbox{Sing}_{+}(M(F))$ of $M(F)$,
 therefore $Q(S^n)$ acts trivially on $J(F).$
 \end{proof}

\begin{lemma}\label{lem4}$Q(S^n)\otimes_{U(S_{n})_{\geq 0}} v_{\lambda} \subset \hbox{ Sing}_+(M(F)) $
 if and only if $\,Q(S^n)$ acts trivially on $J(F).$\end{lemma}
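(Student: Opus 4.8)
The plan is to deduce Lemma \ref{lem4} directly from Lemma \ref{lem3} by reducing the containment condition from the whole module $F$ to its highest weight vector $v_\lambda$. The substantive content is entirely in the forward implication; the reverse one is a triviality, and the tool that drives the reduction is the fact that $Q(S^n)$ is a \emph{two-sided} ideal of $U(S_{n})$ together with the irreducibility of $F$.

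First I would dispose of the easy direction. If $Q(S^n)$ acts trivially on $J(F)$, then Lemma \ref{lem3} gives $Q(S^n)\otimes_{U(S_{n})_{\geq 0}} F \subset \hbox{Sing}_+(M(F))$; since $v_\lambda\in F$, restricting to the single vector $v_\lambda$ yields $Q(S^n)\otimes_{U(S_{n})_{\geq 0}} v_\lambda \subset \hbox{Sing}_+(M(F))$ at once. For the converse, assume $Q(S^n)\otimes_{U(S_{n})_{\geq 0}} v_\lambda \subset \hbox{Sing}_+(M(F))$. The key observation is that $F$ is an irreducible $(S_{n})_0$-module with highest weight vector $v_\lambda$, where $(S_{n})_0\cong\frak{sl}_{n}(\mathbb{F})$ by Remark \ref{rmk3}; hence $F = U((S_{n})_0)\,v_\lambda$, so every $w\in F$ can be written as $w = g\cdot v_\lambda$ for some $g\in U((S_{n})_0)$. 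Because $(S_{n})_0\subset (S_{n})_{\geq 0}$, such a $g$ lies in $U((S_{n})_{\geq 0})$, and therefore in the balanced tensor product over $U(S_{n})_{\geq 0}$ it may be absorbed into the left factor: for any $q\in Q(S^n)$,
$$q\otimes w = q\otimes(g\cdot v_\lambda) = (qg)\otimes v_\lambda.$$
Since $Q(S^n)$ is a two-sided ideal, $qg\in Q(S^n)$, so $(qg)\otimes v_\lambda \in Q(S^n)\otimes_{U(S_{n})_{\geq 0}} v_\lambda \subset \hbox{Sing}_+(M(F))$ by hypothesis. As $w\in F$ was arbitrary, this proves $Q(S^n)\otimes_{U(S_{n})_{\geq 0}} F \subset \hbox{Sing}_+(M(F))$, and Lemma \ref{lem3} then concludes that $Q(S^n)$ acts trivially on $J(F)$.

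The argument is essentially formal once these ingredients are assembled, so I do not expect a genuine obstacle. The one point that deserves care is the justification that $g$ can be moved across the tensor symbol: this rests on the fact that the weight-lowering within $F$ takes place entirely in degree zero of $S_{n}$, i.e. $U((S_{n})_0)\subset U((S_{n})_{\geq 0})$, which is precisely what makes the defining relation of $\otimes_{U(S_{n})_{\geq 0}}$ applicable, while the two-sidedness of $Q(S^n)$ is what guarantees that $qg$ remains in the ideal. Both facts are already available from the setup, so the proof amounts to recording this reduction.
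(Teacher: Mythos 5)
Your proof is correct and takes essentially the same route as the paper: the easy direction by restricting Lemma \ref{lem3} to $v_\lambda \in F$, and the substantive direction by writing each $w \in F$ as $g \cdot v_\lambda$ with $g \in U((S_{n})_0) \subseteq U((S_{n})_{\geq 0})$, absorbing $g$ across the balanced tensor product, and invoking the two-sidedness of $Q(S^n)$ to keep $qg$ in the ideal. The paper compresses exactly these steps into the phrase ``immediate from the definition of generalized Verma module and the facts that $F$ is a highest weight $\frak{sl}_{n}(\mathbb{F})$-module and $\frak{sl}_{n}(\mathbb{F}) \subseteq U(S_{n})_{\geq 0}$,'' so your write-up simply makes the paper's argument explicit.
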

\begin{proof}Due to Lemma \ref{lem3}  we only need to show that  $Q(S^n)\otimes_{U(S_{n})_{\geq 0}} v_{\lambda} \subset \hbox{ Sing}_+(M(F))$  implies that  $\,Q(S^n)$ acts trivially on $J(F).$ It is immediate from the definition of generalized Verma module and the facts that $F$ is a highest weight $\frak
{sl}_{n}(\mathbb{F})$-module and $\frak
{sl}_{n}(\mathbb{F})\subseteq U(S_{n})_{\geq 0}$.
\end{proof}
\subsection{Description of the ideal $Q(S^n)$}
 $\hbox{Inder}(S^n)\simeq
S_{n},$ where the isomorphism is given explicitly by
 \begin{equation}\label{iso2} \hbox{ad}(f_1\wedge\cdots\wedge f_{n-1})\longrightarrow \sum_{i=1}^{n}(-1)^{n+i}\hbox{ det} \left(
\begin{array}{cc}
 D_1(f_1)\, \cdots \,D_1(f_{n-1}) \\
    \cdots\cdots\cdots\cdots\cdots\cdots \cdots \\
    \hat{D_i}(f_1)\, \cdots \,\hat{D_i}(f_{n-1}) \\
    \cdots\cdots\cdots\cdots\cdots\cdots \cdots \\
    D_{n}(f_1)\, \cdots \,D_{n}(f_{n-1}) \\
  \end{array}
\right) D_i,\end{equation} for any $f_1,\cdots, f_{n-1} \in
\mathbb{F}[[x_1,\cdots,x_{n}]], \, D_j=\frac{\partial}{\partial
x_j}$ and the hat means that the $i$-th row  does no appear in the
matrix.
\
\noindent Consider  the subset
$$A=\{D=\sum\limits_{i=1}^{n}f_i
D_i \in S_n: \, f_i \in \mathbb{F}[x_1, \cdots,
x_{n}]\}.$$ It is dense in
$S_{n}$.
Since we are classifying continuous representations  it is enough to characterize a set of generator of $Q_A(S^n):= Q(S^n)\bigcap A$.
Take $f_1, \cdots f_{2n-2} \in \mathbb{F}[x_1, \cdots,x_{n}],$
where $f_l=X^{I_{l}}$ with
$$X^{I_{l}}:=x_1^{i_1^l}x_2^{i_2^l}\cdots
x_{n}^{i_{n}^l},\,$$
where $ I_l:=(i_1^l,\cdots,i_{n}^l)$ with
$i_1^l,\cdots,i_{n}^l \in \mathbb{Z}_{\geq 0}$ and $\, l \in
\{1\cdots,2n-2\}.$ Then the generators of $Q_A(S^n)$ are given by

\begin{equation}\label{eq16}\,x_{f_1,\cdots,f_{2n-2}}=
\left(\sum\limits_{k=1}^{n}\tilde{\alpha}(k)\,
D_k\right)-\sum_{i=1}^{n}(-1)^{i+n}\left(\sum\limits_{q=1}^{n}
\tilde{\beta}(i,q)\,D_q\right)\left(\sum\limits_{s=1}^{n}\tilde{\gamma}(i,s)\,D_s\right),
\end{equation}

\noindent where

 \medskip $\tilde{\alpha}(k)=(-1)^{n+k}\displaystyle{\frac{f_1\cdots
f_{2n-2}}{x_1^2\cdots x_k \cdots x_{n}^2}\; \hbox{ det}\tilde{A}\hbox{
det}\tilde{B}_{k},\quad k=1\cdots,n,}\\$

\

  \medskip$\tilde{\beta}(i,q)=(-1)^{n+q}\displaystyle{\frac{f_1\cdots\hat{f_i}\cdots f_{n}}{x_1\cdots
\hat{x_q} \cdots x_{n}}\hbox{  det}\tilde{A}_{q, i},\quad
q=1\cdots,n,}\\$ \,

\

  $\tilde{\gamma}(i,s)=(-1)^{n+s}\displaystyle{\frac{f_{i}f_{n+1}\cdots
\cdots f_{2n-2}}{x_1\cdots \hat{x_s} \cdots x_{n}}\,
\hbox{ det} \tilde{C}^{(i)}_{s}, \quad s=1\cdots, n,}$

\

\noindent with $i=1,\cdots,n$ and the matrices $\tilde{A}$, $\tilde{B}$ and $\tilde{C}\,$'s are defined as follows:

\
 $$\tilde{A}=\left(
    \begin{array}{cccc}

i_{1}^1&\cdots  & i_{1}^n\\

      \cdots&\cdots&\cdots\\
      \cdots&\cdots&\cdots\\
        i_{n}^1&\cdots & i_{n}^n\\ \\
    \end{array}
  \right),$$
 $\tilde{A}_{q, i}$ is the matrix $\tilde{A}$ with the $q$-row and the $i$-column removed,
%
 \bigskip $$\tilde{B}_{k}=\left(
    \begin{array}{ccccc}
      \sum_{r=1}^{n}(i_1^r-1)&i_1^{n+1}&\cdots&i_1^{2n-2}\\
      \cdots&\cdots&\cdots&\cdots\\
      \widehat{\sum_{r=1}^{n}(i_k^r-1)}&\widehat{i_k^{n+1}}&\cdots&\widehat{i_{k}^{2n-2}}\\
      \cdots&\cdots&\cdots&\cdots\\
       \sum_{r=1}^{n}(i_{n}^r-1) &i_{n}^{n+1}&\cdots &i_{n}^{2n-2}\\
    \end{array}
  \right)$$

and
   \bigskip $$\tilde{C}_{s}^{(i)}=\left(
    \begin{array}{cccccc}
      i_1^{i}&i_1^{n+1}&\cdots&i_1^{2n-2}\\
      \cdots&\cdots&\cdots&\cdots\\
      \widehat{i_{s}^{i}}&\widehat{i_s^{n+1}}&\cdots&\widehat{i_{s}^{2n-2}}\\
      \cdots&\cdots&\cdots&\cdots\\
       i_{n}^i&i_{n}^{n+1}&\cdots &i_{n}^{2n-2}\\
    \end{array}
  \right)$$


\noindent where the hats mean that the corresponding row is removed.

\section{Main theorems and their proofs}
In this section we will state the main result of this paper. Recall that the inner derivations of the simple linearly compact $n$-Lie algebra $S^{n}$ are isomorphic to $S_{n}$ and denote by $\frak h\, $
 the Cartan subalgebra of the Lie algebra $\frak{sl}_{n}(\mathbb{F})$ chosen  above Theorem \ref{th6}.
Let $F$  be a finite dimensional irreducible highest weight
$\frak{sl}_{n}(\mathbb{F})$-module, with highest weight $\lambda \in \frak h^* $ and
highest weight vector $v_{\lambda.}$
Recall that our goal is to determine for which $\lambda \in \frak h^{\ast},$ the two sided ideal $Q(S^n)$ acts trivially on the irreducible highest weight module $J(F)=M(F)/\hbox{ Sing}_+(M(F)),$ This will ensure us that $J(F)$ is an $n$-Lie module of $S^n.$ Let's denote by $\lambda_i=\lambda(E_{i, i}-E_{i+1,i+1})$  for $ i=1, \cdots ,n-1$
\noindent and introduce the following useful notation  for the proof of the theorem,
\begin{equation}\label{eq19}\delta_{i,j}=
\begin{cases} 1 \;&\hbox{if}\; i\geq j\\
0  &\;\hbox{otherwise},
\end{cases}
\end{equation}
with $i, j \in\{1,\cdots, n\}.$

\begin{theorem} 

 Let $n\geq 3$ and
  $F$ be a finite dimensional  irreducible highest weight  $\frak{sl}_{n}(\mathbb{F})$-module,
    then the irreducible  continuous representation $J(F)$ of $S_{n}$ is an irreducible continuous representation of the simple linearly compact $n$-Lie algebra $S^n$ if and only if $\lambda \in \frak h^{\ast}$ is such that $\lambda=(0,0,\cdots,0).$

\end{theorem}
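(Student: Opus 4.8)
The plan is to establish the equivalence through Lemma \ref{lem4}, turning the triviality of $Q(S^n)$ on $J(F)$ into a singular-vector condition that I can test on the highest weight vector alone. By Corollary \ref{corol3} and Theorem \ref{th6} every irreducible continuous $S_{n}$-module is $J(F)$ for a finite-dimensional irreducible $\frak{sl}_{n}(\mathbb{F})$-module $F$, and $J(F)$ is an $n$-Lie module for $S^n$ exactly when $Q(S^n)$ acts trivially on it. By Lemma \ref{lem4} this holds if and only if $x_{f_1,\cdots,f_{2n-2}}\otimes v_{\lambda}\in \hbox{Sing}_+(M(F))$ for every generator $x_{f_1,\cdots,f_{2n-2}}$ of $Q_A(S^n)$ given in \eqref{eq16}. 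So the whole problem reduces to deciding, in terms of $\lambda$, which of these explicit elements are singular vectors.

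First I would exploit the grading of $S_{n}$. Reading off \eqref{eq16}, each generator is homogeneous of degree $\sum_{l}|I_l|-2n$, where $|I_l|=\sum_j i_j^l$, so by choosing the monomials $f_l$ with $\sum_l|I_l|=2n-1$ (for instance $f_1,\dots,f_n$ linear with nonzero Jacobian and $f_{n+1},\dots,f_{2n-2}$ of total degree $n-1$) the vector $x_{f_1,\cdots,f_{2n-2}}\otimes v_\lambda$ lands in $M(F)_{-1}=(S_{n})_{-1}\otimes F$. In degree $-1$ a vector is singular precisely when it is killed by $(S_{n})_{1}$, because $(S_{n})_{\geq 2}$ shifts into positive degree and annihilates $M(F)_{-1}$ automatically; this collapses the singular-vector test to one linear equation $(S_{n})_{1}\cdot\big(x_{f_1,\cdots,f_{2n-2}}\otimes v_\lambda\big)=0$ valued in $M(F)_0=F$.

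Next I would carry out this evaluation. In the two-term expression \eqref{eq16} the quadratic part acts on $v_\lambda$ by first applying a degree-$0$ operator through the $\frak{sl}_{n}(\mathbb{F})$-module structure — producing the weights $\lambda_i$ — and then a degree $-1$ operator, while the linear part contributes a bare degree $-1$ operator; applying $(S_{n})_{1}$ and using $[(S_{n})_{1},(S_{n})_{-1}]\subseteq (S_{n})_0=\frak{sl}_{n}(\mathbb{F})$ brings a second weight factor into play. The outcome is a combination of $v_\lambda$ whose coefficients are polynomials in the $\lambda_i$ and in the determinants $\det\tilde A$, $\det\tilde B_k$ and $\det\tilde C^{(i)}_s$; organizing the index sums with the symbol $\delta_{i,j}$ of \eqref{eq19} and then specializing the monomials $f_l$ so as to isolate one $\lambda_i$ at a time, I expect the system of equations to force $\lambda_1=\dots=\lambda_{n-1}=0$, which gives the necessity of $\lambda=(0,\dots,0)$.

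For the converse I would show that at $\lambda=0$ every generator of $Q_A(S^n)$ meets the singular-vector condition in all degrees, not merely in degree $-1$. One route is computational: at $\lambda=0$ all weight-dependent coefficients above vanish and what remains are determinantal identities among $\det\tilde A$, $\det\tilde B_k$, $\det\tilde C^{(i)}_s$ that make each $x_{f_1,\cdots,f_{2n-2}}\otimes v_0$ singular. A cleaner structural route is to identify $J(F_0)$ with the $n$-Lie module furnished by the adjoint action of $S^n$ on itself: since $S^n$ is by definition an $n$-Lie algebra, it is an $n$-Lie module over itself, so by Theorem \ref{Main theorem}(2) the ideal $Q(S^n)$ acts trivially there, and matching this with the $\lambda=0$ case via the isomorphism \eqref{iso2} closes the argument. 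The main obstacle is exactly the bookkeeping of the third paragraph: evaluating the product operator in \eqref{eq16} on $v_\lambda$, commuting $(S_{n})_{1}$ into the Cartan, and verifying that the resulting linear system in the $\lambda_i$ has only the zero solution — this determinantal computation is where all the work lies.
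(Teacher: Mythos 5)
Your reduction via Lemma \ref{lem4}, and your observation that a degree $-1$ element of $M(F)$ is singular precisely when it is killed by $(S_{n})_{1}$, are both correct and agree with the paper's setup. The gap is in your third paragraph: the system of equations coming from the degree $-1$ generators does \emph{not} force $\lambda=0$, so the necessity direction fails as proposed. Concretely, take $\lambda=(0,\dots,0,1)$, so that $F\cong\Lambda^{n-1}\mathbb{F}^{n}\cong(\mathbb{F}^{n})^{*}$ with basis $e_{1}^{*},\dots,e_{n}^{*}$, action $E_{i,j}e_{k}^{*}=-\delta_{ik}e_{j}^{*}$ and highest weight vector $v_{\lambda}=e_{n}^{*}$. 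Evaluating the degree $-1$ generators of $Q_{A}(S^{n})$ on $1\otimes v_{\lambda}$ (these are exactly the paper's equations (\ref{eq23}) and (\ref{eq24})) produces only scalar multiples of the vectors $u_{a,b}=D_{a}\otimes e_{b}^{*}-D_{b}\otimes e_{a}^{*}$, and a direct check on a spanning set of $(S_{n})_{1}$ (the fields $x_{a}x_{b}D_{c}$ with $c\notin\{a,b\}$, $x_{a}^{2}D_{c}$ with $c\neq a$, $x_{a}^{2}D_{a}-2x_{a}x_{b}D_{b}$, and $x_{a}x_{b}D_{b}-x_{a}x_{c}D_{c}$) shows that every $u_{a,b}$ is annihilated by $(S_{n})_{1}$. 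These $u_{a,b}$ are precisely the non-trivial singular vectors of the degenerate Verma module $M(F_{n-1})$ whose existence is asserted in Theorem \ref{th6}; in other words, at the exceptional weight $(0,\dots,0,1)$ the degree $-1$ part of $Q_{A}(S^{n})\otimes v_{\lambda}$ lands exactly inside $\hbox{Sing}_{+}(M(F_{n-1}))$. So this weight passes your test, and no choice of monomials confined to total degree $2n-1$ can eliminate it: the obstruction is structural, not a matter of bookkeeping.

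This is precisely why the paper's proof hinges on the generators of degree $0$ (its Case 2, total degree $2n$): their images lie in $1\otimes F=M(F)_{0}$, and since $F_{+}=U_{+}((S_{n})_{-})\otimes F$ has no degree-zero component, membership in $\hbox{Sing}_{+}(M(F))$ forces those images to vanish identically, giving the equations (\ref{eq28})--(\ref{eq44}); it is these equations, together with a further weight-multiplicity argument to dispose of the surviving candidate $(0,\dots,0,1)$, that pin down $\lambda=0$. Note also that conditions such as $E_{k,j}E_{k,j}v_{\lambda}=0$ in (\ref{eq43}) are not polynomial conditions in the $\lambda_{i}$ alone — they depend on weight multiplicities of $F$ — so the picture of ``a linear system in the $\lambda_{i}$'' is too optimistic even in degree $0$. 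Finally, your structural route for the converse is not immediate either: the adjoint module $S^{n}=\mathbb{F}[[x_{1},\dots,x_{n}]]$ is linearly compact, whereas $J(F_{0})$ lives in the category of discrete modules $P(S_{n},(S_{n})_{\geq 0})$, so identifying the two requires passing to a continuous dual (via Proposition \ref{prop2}) and proving that this dual's irreducible constituent is $J(F_{0})$; the paper avoids this entirely by noting that $F_{0}$ is one-dimensional and trivial, so that all the displayed equations vanish for $\lambda=0$.
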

\begin{proof}
Let $F$  be a highest weight irreducible finite dimensional
$\frak{sl}_{n}(\mathbb{F})$-module, with highest weigh $\lambda \in \frak h^* $ and
highest weigh vector $v_{\lambda.}$ Recall that $\frak h:=\displaystyle{\oplus_{i=1}^{n-1}\mathbb{F}\,(E_{i,i}-E_{i+1,i+1})}$ is the chosen Cartan subalgebra of the Lie algebra $\frak{sl}_{n}(\mathbb{F}).$ Here we are identifying the subalgebra $\frak h$ with the subalgebra of $S_{n}$ generated by the elements $x_i\frac{\partial}{\partial x_i}-x_{i+1}\frac{\partial}{\partial x_{i+1}}, \, i=1, \cdots, n-1.$ 
Consider F as a $(S_{n})_{\geq
0}$-module and take the induced module $M(F)=U(S_{n})\otimes_{U((S_{n})_{\geq 0})} F.$ We will use Lemma \ref{lem4} and the general look of the generators of $Q_A(S^n)$  to find out for which $\lambda$'s, $Q_A(S^n)$ acts trivially in $J(F)$.
 \, Let $w_{\lambda}=1\otimes_{U((S_{n})_{\geq 0})}v_{\lambda}=1\otimes v_{\lambda}.$

According to the description of the generators given in (\ref{eq16}) and taking into account that $(S_{n})_+$ acts by zero on $w_{\lambda},$ it is enough to consider the subset of generators $Q_A(S^n)$ and ask them to either act trivially $w_{\lambda}$ if $F$ is non-exceptional or $Q_A(S^n)\otimes v_{\lambda} \subseteq \hbox{Sing}_{+}(M(F))$ otherwise.  It is enough to consider $x_{f_1, \cdots, f_{2n-2}}$  with monomials $f_i\in \mathbb{F}[x_1,\cdots,x_{n}]$  as in (\ref{eq16}) such that,
\begin{itemize}
\item[(1)] $\hbox{deg}(f_1\cdots
f_{2n-2})=2n-1$ and there exist $i \in \{1, \cdots, n\}$ such that
\begin{itemize}
\item [(a)] $\hbox{deg}(f_if_{n+1}\cdots f_{2n-2})=n-1$ or
\item[(b)]$\hbox{deg}(f_if_{n+1}\cdots f_{2n-2})=n,$
\end{itemize}
\item[(2)] $\hbox{deg}(f_1f_2\cdots
f_{2n-2})=2n,$ and there exist $i \in \{1, \cdots, n\}$ such that $\hbox{deg}(f_if_{n+1}\cdots f_{2n-2})=n$
\item[(3)] $\hbox{deg}(f_1f_2\cdots
f_{2n-2})=2n+1,$ and there exist $i \in \{1, \cdots, n\}$ such that $\hbox{deg}(f_if_{n+1}\cdots f_{2n-2})=n$
\end{itemize}
since the remaining ones are either zero or  act trivially any way.
Here, we are assuming by simplicity that $i=n$ and $f_i \notin \mathbb{F}$ for all $i=1,\cdots, 2n-2$. 
Let's analyze each possible case.

\

 \noindent \underline{Case 1(a)}:

 \

 \noindent Here, $\hbox{deg}(f_1\cdots f_{2n-2})=2n-1,$ $\hbox{deg}(f_1\cdots f_{n-1})=n$ and  $\hbox{deg}(f_n\cdots f_{2n-2})=n-1.$ We have two possible expressions for $f_1\cdots f_{n-1}$ such that $x_{f_1,\cdots, f_{2n-2}}\neq 0$ and two expression for $f_nf_{n+1}\cdots f_{2n-2}.$ Namely, there exist $q,l,j,k, m \in \{1, \cdots, n\}$, such that
  \begin{equation}
 \label{case1.1}f_1\cdots f_{n-1}=x_1 \cdots x_{n}\end{equation}
 or
 \begin{equation}
 \label{case1.2}f_1\cdots f_{n-1}=x_1 \cdots\hat{x_l}\cdots x_m^2\cdots x_{n},\end{equation}
and
 \begin{equation}\label{case1.3}f_n\cdots f_{2n-2}=x_1\cdots \hat{x_k}\cdots x_{n}\end{equation}
 or
 \begin{equation}\label{case1.4}f_n\cdots f_{2n-2}=x_1\cdots \hat{x_q}\cdots \hat{x_j}\cdots x_{k}^2\cdots x_{n}.\end{equation}

\


 \

   Suppose we have (\ref{case1.1}) and (\ref{case1.3}), namely $f_1\cdots f_{n-1}=x_1 \cdots x_{n}$ and $f_n\cdots f_{2n-2}=x_1\cdots \hat{x_k}\cdots x_{n}$ for some $k \in \{1,\cdots, n\}.$ Therefore we can consider the monomials as follows.
   \begin{itemize}
   \item[(i)] Let  $n\geq 3$ and $l,j, k \in \{1,\cdots n\}.$ Note that to define the monomials $f_{n+1},\cdots f_{2n-2}$ we are assuming that $j<k$. Otherwise, we can interchange those indexes in the definition of $f_{n+1},\cdots f_{2n-2}.$
\begin{align*}
&\,f_{s}=x_s, \qquad\quad s=1,\cdots l-1,\nonumber\\
&\,f_{s}=x_{s+1}, \qquad\quad s=l,\cdots n-1, \,\,  s\neq j-\delta_{j,l},\nonumber\\
&\, f_{j-\delta_{j,l}}=x_jx_l,\qquad f_n = x_j,\nonumber\\
&\,f_{n+s}=x_s,\qquad  s=1,\cdots,j-1,\nonumber\\
 &\,f_{n+s}=x_{s+1}, \quad s=j,\cdots k-2,\nonumber\\
 &\,f_{n+s}=x_{s+2}, \quad s=k-1,\cdots n-2.\nonumber\\
\end{align*}
\noindent Thus, using (\ref{eq16}) for these $f_i$'s, it follows that,
\begin{equation}\label{eq23} x_{f_1,\cdots,f_{2n-2}}\cdot (1\otimes v_{\lambda})=(-1)^{j+l+k+\delta_{k,l}} (D_l \otimes E_{l,k}v_{\lambda}-D_j\otimes E_{j,k}v_{\lambda}$$$$-D_k\otimes(E_{l,l}-E_{j,j})v_{\lambda}).\end{equation}

 Now suppose (\ref{case1.1}) and (\ref{case1.4}), namely $f_1\cdots f_{n-1}=x_1 \cdots x_{n}$ and $f_n\cdots f_{2n-2} = x_1\cdots \hat{x_q}\cdots \hat{x_j}\cdots x_{k}^2\cdots x_{n}$ for some $j, q, k \in \{1,\cdots, n\}.$  Therefore, we have the following possibilities.

 \item [(ii)] Let $n\geq 4$ and $q ,l,j, k \in \{1,\cdots n-1\}.$ Note that to define the monomials $f_{n+1},\cdots f_{2n-2}$ we are assuming that $q<j$. Otherwise, we can interchange those indexes in the definition of $f_{n+1},\cdots f_{2n-2}.$
\begin{align*}
&\,f_{s}=x_s, \qquad\quad s=1,\cdots l-1,\,\,  s\neq k-\delta_{k,l},\nonumber\\
&\,f_{s}=x_{s+1}, \qquad\quad s=l,\cdots n-1,\nonumber\\
&\, f_{k-\delta_{k,l}}=x_kx_l,\qquad f_n = x_k,\nonumber\\
&\,f_{n+s}=x_s,\qquad  s=1,\cdots,q-1,\nonumber\\
 &\,f_{n+s}=x_{s+1}, \quad s=q,\cdots j-2,\nonumber\\
 &\,f_{n+s}=x_{s+2}, \quad s=j-1,\cdots n-2.\nonumber\\
\end{align*}
By (\ref{eq16}) we have:
\begin{equation}\label{eq24} x_{f_1,\cdots,f_{2n-2}}\cdot (1\otimes v_{\lambda})=(-1)^{l+q+j+\delta_{q,j}} (D_j \otimes E_{k,q}v_{\lambda}-D_q\otimes E_{k,j}v_{\lambda}).\end{equation}
\end{itemize}
Equation (\ref{case1.2})  combined with equations  (\ref{case1.3}) and (\ref{case1.4}) don't give us new results.

\pagebreak

 \noindent \underline{Case 1(b)}:

 \

Don't provide new information.

\

 \noindent \underline{Case 2}:

 \
 \noindent Here, $\hbox{deg}(f_1\cdots f_{2n-2})=2n,$ $\hbox{deg}(f_1\cdots f_{n-1})=n= \hbox{deg}(f_n\cdots f_{2n-2})$  The two possible expressions for $\hbox{deg}(f_1\cdots f_{n-1})$ such that $x_{f_1,\cdots, f_{2n-2}}\neq 0$ are the same that (\ref{case1.1}) and (\ref{case1.2}). We have three expression for $f_n\cdots f_{2n-2}.$ Namely, there exist $q,j, k, r \in \{1, \cdots, n\}$, such that
 \begin{equation}\label{case2.3}f_n\cdots f_{2n-2}=x_1\cdots x_{n}\end{equation}
 or
 \begin{equation}\label{case2.4}f_n\cdots f_{2n-2}=x_1\cdots \hat{x_j}\cdots x_{k}^2\cdots x_{n}\end{equation}
 or
 \begin{equation}\label{case2.5}f_n\cdots f_{2n-2}=x_1\cdots \hat{x_q}\cdots \hat{x_r}\cdots{x_j}^2\cdots x_{k}^2\cdots x_{n}.\end{equation}

  \noindent Consider (\ref{case1.1}) and (\ref{case2.3}), namely $f_1\cdots f_{n-1}=x_1 \cdots x_{n}=f_n\cdots f_{2n-2}.$   Therefore we can consider the monomials as follows.
   \begin{itemize}
   \item[(i)] Let  $n\geq 4$ and $l, m, j, k \in \{1,\cdots n\},$  Note that to define the monomials $f_{n+1},\cdots f_{2n-2}$ we are assuming that $m<j$. Otherwise, we can interchange those indexes in the definition of $f_{n+1},\cdots f_{2n-2}.$
\begin{align*}
&\,f_{s}=x_s, \qquad\quad s=1,\cdots j-1,\,\,  s\neq l-\delta_{l,j},\nonumber\\
&\,f_{s}=x_{s+1}, \qquad\quad s=j,\cdots n-1,\nonumber\\
&\, f_{l-\delta_{l,j}}=x_lx_j,\qquad f_n = x_j,\nonumber\\
&\,f_{n+s}=x_s,\qquad  s=1,\cdots,m-1,\,\, s\neq k-\delta_{k,m}-\delta_{k, j},\nonumber\\
&\, f_{k-\delta_{k,m}-\delta_{k, j}}=x_kx_m, \nonumber\\
 &\,f_{n+s}=x_{s+1}, \quad s=m,\cdots j-2,\nonumber\\
 &\,f_{n+s}=x_{s+2}, \quad s=j-1,\cdots n-2.\nonumber\\
\end{align*}
\noindent Thus, using (\ref{eq16}) for these $f_i$'s, it follows that,
\begin{equation}\label{eq28} x_{f_1,\cdots,f_{2n-2}}\cdot (1\otimes v_{\lambda})=(-1)^{m+\delta_{m,j}} (1\otimes E_{j,m}E_{m,j}v_{\lambda}-1\otimes E_{jk}E_{kj}v_{\lambda}$$$$+1\otimes(E_{m,m}-E_{k,k})(1-(E_{j,j}-E_{l,l}))v_{\lambda}).\end{equation}

Suppose in the definitions of $f_1, \cdots f_{2n-2}$ in (i), we take $m := l,$ then we have:
 \item [(ii)]Let  $n\geq 3$ and $j, k,l\in \{1,\cdots n\},$
\begin{equation}\label{eq29} x_{f_1,\cdots,f_{2n-2}}\cdot (1\otimes v_{\lambda})=(-1)^{l+\delta_{j,l}} (1\otimes E_{jk}E_{kj}v_{\lambda}$$$$-1\otimes (E_{l,l}-E_{k,k})(1-(E_{j,j}-E_{l,l}))v_{\lambda}).\end{equation}
\
Now, suppose in the definitions of $f_1, \cdots f_{2n-2}$ in (i), we take $l: = k,$ then we have:
 \item[(iii)] Let  $n\geq 3$ and $m,j, k\in \{1,\cdots n\},$ 
\begin{equation}\label{eq30} x_{f_1,\cdots,f_{2n-2}}\cdot (1\otimes v_{\lambda})=(-1)^{m+\delta_{m,j}} (1\otimes (E_{j,j}-E_{k,k})(1-(E_{m,m}-E_{k,k}))v_{\lambda}).\end{equation}

Suppose in the definitions of $f_1, \cdots f_{2n-2}$ in(iii), we take $l: = j, \, j:=m$ and $m:=q$ then we have:
 \item[(iv)] Let  $n\geq 4$ and $q,m,j, k\in \{1,\cdots n\},$ 
\begin{equation}\label{eq31} x_{f_1,\cdots,f_{2n-2}}\cdot (1\otimes v_{\lambda})=(-1)^{q+m+j+\delta_{j,q}} (1\otimes (E_{m,m}-E_{j,j})(E_{q,q}-E_{k,k})v_{\lambda}).\end{equation}

 Now consider (\ref{case1.1}) and (\ref{case2.4}), namely $f_1\cdots f_{n-1}=x_1\cdots x_{n}$ and $f_n\cdots f_{2n-2}=x_1\cdots \hat{x_j}\cdots x_{k}^2\cdots x_{n}$.
  \item[(v)] Let  $n\geq 5$ and $q,m,j, k,l\in \{1,\cdots n\}.$  Note that to define the monomials $f_{n+1},\cdots f_{2n-2}$ we are assuming that $q<j$. Otherwise, we can interchange those indexes in the definition of $f_{n+1},\cdots f_{2n-2}.$
\begin{align*}
&\,f_{s}=x_s, \qquad\quad s=1,\cdots k-1,\nonumber\\
&\,f_{s}=x_{s+1}, \qquad\quad s=k,\cdots n-1,\,\,  s\neq l-\delta_{l,j}, \nonumber\\
&\, f_{l-\delta{l,j}}=x_lx_k,\qquad f_n = x_k,\nonumber\\
&\,f_{n+s}=x_s,\qquad  s=1,\cdots,q-1,\,\, s\neq m-\delta_{m,q}-\delta_{m, j},\nonumber\\
&\, f_{m-\delta_{m,q}-\delta_{m, j}}=x_mx_q, \nonumber\\
 &\,f_{n+s}=x_{s+1}, \quad s=q,\cdots j-2,\nonumber\\
 &\,f_{n+s}=x_{s+2}, \quad s=j-1,\cdots n-2.\nonumber\\
\end{align*}
\noindent Thus, using (\ref{eq16}) for these $f_i$'s, it follows that,
\begin{equation}\label{eq32} x_{f_1,\cdots,f_{2n-2}}\cdot (1\otimes v_{\lambda})=(-1)^{(j+q+k+\delta_{q,k})} (1\otimes E_{k,q}E_{q,j}v_{\lambda}-1\otimes E_{k,m}E_{m,j}v_{\lambda}$$$$-1\otimes E_{kj}(E_{q,q}-E_{m,m})v_{\lambda}).\end{equation}
Suppose in the definitions of $f_1, \cdots f_{2n-2}$ in (v) we take $m := l,$ then we have:
 \item[(vi)] Let  $n\geq 4$ and $m,j, k,l\in \{1,\cdots n\},$  
\begin{equation}\label{eq33} x_{f_1,\cdots,f_{2n-2}}\cdot (1\otimes v_{\lambda})=(-1)^{(j+m+k+\delta_{m,k})} (1\otimes E_{k,q}E_{q,j}v_{\lambda}$$$$-1\otimes E_{kj}(E_{q,q}-E_{l,l})v_{\lambda}).\end{equation}
Now, suppose in the definitions of $f_1, \cdots f_{2n-2}$ in (v), we take $ l:=j,$ then we have:
\item[(vii)] Let  $n\geq 4$ and $m,j, k,l\in \{1,\cdots n\},$ 
\begin{equation}\label{eq34} x_{f_1,\cdots,f_{2n-2}}\cdot (1\otimes v_{\lambda})=(-1)^{(l+m+k+\delta_{m,l})} (1\otimes E_{k,q}E_{q,j}v_{\lambda}-1\otimes E_{km}E_{m,j}v_{\lambda}).\end{equation}


\item[(viii)] Let  $n\geq 5$ and $m, q, j, k,l\in \{1,\cdots n\}.$  Note that to define the monomials $f_{n+1},\cdots f_{2n-2}$ we are assuming that $q<j$. Otherwise, we can interchange those indexes in the definition of $f_{n+1},\cdots f_{2n-2}.$
\begin{align*}
&\,f_{s}=x_s, \qquad\quad s=1,\cdots m-1,\,\,  s\neq l-\delta_{l,m},\nonumber\\
&\,f_{s}=x_{s+1}, \qquad\quad s=m,\cdots n-1,\nonumber\\
&\, f_{l-\delta_{l,m}}=x_lx_m,\qquad f_n = x_kx_q,\nonumber\\
&\,f_{n+s}=x_s,\qquad  s=1,\cdots,q-1,\nonumber\\
 &\,f_{n+s}=x_{s+1}, \quad s=q,\cdots j-2,\nonumber\\
 &\,f_{n+s}=x_{s+2}, \quad s=j-1,\cdots n-2.\nonumber\\
\end{align*}
\noindent Thus, using (\ref{eq16}) for these $f_i$'s, it follows that,
\begin{equation}\label{eq35} x_{f_1,\cdots,f_{2n-2}}\cdot (1\otimes v_{\lambda})=(-1)^{(m+j+q+\delta_{q,j})} (1\otimes E_{k,j}(E_{m,m}-E_{l,l})v_{\lambda}).\end{equation}
Suppose in the definitions of $f_1, \cdots f_{2n-2}$ in (viii) we take $ l:=k,$ then we have:
\item[(ix)] Let  $n\geq 4$ and $m, j, k,l\in \{1,\cdots n\},$  
\begin{equation}\label{eq36} x_{f_1,\cdots,f_{2n-2}}\cdot (1\otimes v_{\lambda})=(-1)^{(m+j+l+\delta_{l,j})} (1\otimes E_{k,j}(E_{m,m}-E_{k,k}-1)v_{\lambda}).\end{equation}
\item[(x)] Let  $n\geq 3$ and $l, j, k\in \{1,\cdots n\}.$  Note that to define the monomials $f_{n+1},\cdots f_{2n-2}$ we are assuming that $l<j$. Otherwise, we can interchange those indexes in the definition of $f_{n+1},\cdots f_{2n-2}.$
\begin{align*}
&\,f_{s}=x_s, \qquad\quad s=1,\cdots l-1,\,\,  s\neq j-\delta_{j,l},\nonumber\\
&\,f_{s}=x_{s+1}, \qquad\quad s=l,\cdots n-1,\nonumber\\
&\, f_{j-\delta_{j,l}}=x_lx_j,\qquad f_n = x_l,\nonumber\\
&\,f_{n+s}=x_s,\qquad  s=1,\cdots,l-1,\, s\neq k -\delta_{k,l}-\delta_{k,j},\nonumber\\
&\, f_{n+k-\delta_{k, l}-\delta_{k,j}}=x_k^2,\nonumber\\
 &\,f_{n+s}=x_{s+1}, \quad s=l,\cdots j-2,\nonumber\\
 &\,f_{n+s}=x_{s+2}, \quad s=j-1,\cdots n-2.\nonumber\\
\end{align*}
\noindent Thus, using (\ref{eq16}) for these $f_i$'s, it follows that,
\begin{equation}\label{eq37} x_{f_1,\cdots,f_{2n-2}}\cdot (1\otimes v_{\lambda})=(-1)^{(j+\delta_{j,l})} (1\otimes E_{k,j}(E_{l,l}-E_{j,j})v_{\lambda}).\end{equation}
\end{itemize}

Suppose (\ref{case1.1}) and (\ref{case2.5}), namely $f_1\cdots f_{n-1}= x_1\cdots x_n$ and
 $f_n\cdots f_{2n-2}=x_1\cdots \hat{x_q}\cdots \hat{x_r}\cdots{x_j}^2\cdots x_{k}^2\cdots x_{n},$ for some $q,r,j,k \in \{1,\cdots, n\}.$
  \begin{itemize}
  \item[(xi)] Let  $n\geq 5$ and $r, q, j, k,l\in \{1,\cdots n\}.$  Note that to define the monomials $f_{n+1},\cdots f_{2n-2}$ we are assuming that $r<q$. Otherwise, we can interchange those indexes in the definition of $f_{n+1},\cdots f_{2n-2}.$
\begin{align*}
&\,f_{s}=x_s, \qquad\quad s=1,\cdots l-1,\nonumber\\
&\,f_{s}=x_{s+1}, \qquad\quad s=l,\cdots n-1,\,\,  s\neq j-\delta_{j,l},\nonumber\\
&\, f_{j-\delta_{j,l}}=x_jx_l,\qquad f_n = x_j,\nonumber\\
&\,f_{n+s}=x_s,\qquad  s=1,\cdots,r-1,\, s\neq k-\delta_{k,r}-\delta_{k,q},\nonumber\\
&\,f_{n+k-\delta_{k,r}-\delta_{k,q}}=x^2_k,\nonumber\\
 &\,f_{n+s}=x_{s+1}, \quad s=r,\cdots q-2,\nonumber\\
 &\,f_{n+s}=x_{s+2}, \quad s=q-1,\cdots n-2.\nonumber\\
\end{align*}
\noindent Thus, using (\ref{eq16}) for these $f_i$'s, it follows that,
\begin{equation}\label{eq38} x_{f_1,\cdots,f_{2n-2}}\cdot (1\otimes v_{\lambda})=(-1)^{(l+q+\delta_{q,r})} 2 (1\otimes E_{j,r}E_{k,q}v_{\lambda}-1\otimes E_{j,q}E_{k,r}v_{\lambda}).\end{equation}
\end{itemize}


\noindent Equations (\ref{case1.2}) and (\ref{case2.3}) don't give us new information.
  Now, consider (\ref{case1.2}) and (\ref{case2.4}), namely $f_1\cdots f_{n-1}=x_1 \cdots\hat{x_l}\cdots x_m^2\cdots x_{n}$ and $f_n\cdots f_{2n-2}=x_1\cdots \hat{x_j}\cdots x_{k}^2\cdots x_{n},$ for some $l, m, j, k \in \{1,\cdots, n\}.$

Suppose $m := k$ and $l := q$ in the equation (\ref{case1.2}), then we have:
  \begin{itemize}
\item[(xii)] Let  $n\geq 4$ and $r, q, j, k\in \{1,\cdots n\}.$  Note that to define the monomials $f_{n+1},\cdots f_{2n-2}$ we are assuming that $q<j$. Otherwise, we can interchange those indexes in the definition of $f_{n+1},\cdots f_{2n-2}.$
\begin{align*}
&\,f_{s}=x_s, \qquad\quad s=1,\cdots k-1,\nonumber\\
&\,f_{s}=x_{s+1}, \qquad\quad s=k,\cdots n-1,\,\,  s\neq q-\delta_{q,k},\nonumber\\
&\, f_{q-\delta_{q,k}}=x^2_q,\qquad f_n = x_k,\nonumber\\
&\,f_{n+s}=x_s,\qquad  s=1,\cdots,q-1,\, s\neq r-\delta_{r,q}-\delta_{r,j},\nonumber\\
&\,f_{n+r-\delta_{r,q}-\delta_{r,j}}=x_rx_q,\qquad  \nonumber\\
 &\,f_{n+s}=x_{s+1}, \quad s=q,\cdots j-2,\nonumber\\
 &\,f_{n+s}=x_{s+2}, \quad s=j-1,\cdots n-2.\nonumber\\
\end{align*}
\noindent Thus, using (\ref{eq16}) for these $f_i$'s, it follows that,
\begin{equation}\label{eq39} x_{f_1,\cdots,f_{2n-2}}\cdot (1\otimes v_{\lambda})=(-1)^{(k+j+q+\delta_{j,q})} (1\otimes E_{q,r}E_{r,j}v_{\lambda}$$$$+ 1\otimes E_{q, j}(E_{q,q}-E_{r,r}+1)v_{\lambda}).\end{equation}

Suppose  in the definitions of $f_1,\cdots, f_{2n-2}$ in the equation (\ref{case1.1}), we take $ m:= k$ and $l:= j$, then we have:
\item[(xiii)] Let  $n\geq 4$ and $r, q, j, k\in \{1,\cdots n\}.$  Note that to define the monomials $f_{n+1},\cdots f_{2n-2}$ we are assuming that $q<j$. Otherwise, we can interchange those indexes in the definition of $f_{n+1},\cdots f_{2n-2}.$
\begin{align*}
&\,f_{s}=x_s, \qquad\quad s=1,\cdots k-1,\nonumber\\
&\,f_{s}=x_{s+1}, \qquad\quad s=k,\cdots n-1, \,\,  s\neq j-\delta_{j,k},\nonumber\\
&\, f_{j-\delta_{j,k}}=x^2_j,\qquad f_n = x_k,\nonumber\\
&\,f_{n+s}=x_s,\qquad  s=1,\cdots,q-1,\, s\neq r-\delta_{r,q}-\delta_{r,j},\nonumber\\
&\,f_{n+r-\delta_{r,q}-\delta_{r,j}}=x_rx_q,\qquad  \nonumber\\
 &\,f_{n+s}=x_{s+1}, \quad s=q,\cdots j-2,\nonumber\\
 &\,f_{n+s}=x_{s+2}, \quad s=j-1,\cdots n-2.\nonumber\\
\end{align*}
\noindent Thus, using (\ref{eq16}) for these $f_i$'s, it follows that,
\begin{equation}\label{eq40} x_{f_1,\cdots,f_{2n-2}}\cdot (1\otimes v_{\lambda})=(-1)^{(k+j+q+\delta_{j,q})} (1\otimes E_{q,j}E_{j,q}v_{\lambda}-1\otimes E_{r,j}E_{j,r}v_{\lambda}$$$$+ 1\otimes (E_{q,q}-E_{r,r})v_{\lambda}).\end{equation}

  \item[(xiv)] Let  $n\geq 5$ and $m, q, j, k,l\in \{1,\cdots n\}.$  Note that to define the monomials $f_{n+1},\cdots f_{2n-2}$ we are assuming that $q<j$. Otherwise, we can interchange those indexes in the definition of $f_{n+1},\cdots f_{2n-2}.$
\begin{align*}
&\,f_{s}=x_s, \qquad\quad s=1,\cdots m-1,\nonumber\\
&\,f_{s}=x_{s+1}, \qquad\quad s=m,\cdots n-1, \,\,  s\neq l-\delta_{l,m},\nonumber\\
&\, f_{l-\delta_{l,m}}=x^2_l,\qquad f_n = x_kx_q,\nonumber\\
&\,f_{n+s}=x_s,\qquad  s=1,\cdots,q-1,\nonumber\\
 &\,f_{n+s}=x_{s+1}, \quad s=q,\cdots j-2,\nonumber\\
 &\,f_{n+s}=x_{s+2}, \quad s=j-1,\cdots n-2.\nonumber\\
\end{align*}
\noindent Thus, using (\ref{eq16}) for these $f_i$'s, it follows that,
\begin{equation}\label{eq41} x_{f_1,\cdots,f_{2n-2}}\cdot (1\otimes v_{\lambda})=(-1)^{(l+q+\delta_{r,q})} (1\otimes E_{l,m}E_{k,j}v_{\lambda}).\end{equation}
Suppose  in the definitions of $f_1,\cdots, f_{2n-2}$ in the example (xiv), we take $l:= k,$ then we have.
  \item[(xv)] Let  $n\geq 4$ and $m, q, j, k\in \{1,\cdots n\},$
\begin{equation}\label{eq42} x_{f_1,\cdots,f_{2n-2}}\cdot (1\otimes v_{\lambda})=(-1)^{(q+\delta_{q,k})} (1\otimes E_{k,m}E_{k,j}v_{\lambda}).\end{equation}
Now, suppose  in the definitions of $f_1,\cdots, f_{2n-2}$ in (xiv) we take $l:= k$ and $m:=j$, then we have:

  \item[(xvi)] Let  $n\geq 3$ and $, q, j, k\in \{1,\cdots n\},$  
\begin{equation}\label{eq43} x_{f_1,\cdots,f_{2n-2}}\cdot (1\otimes v_{\lambda})=(-1)^{(q+\delta_{q,k})} (1\otimes E_{k,j}E_{k,j}v_{\lambda}).\end{equation}
Suppose  in the definitions of $f_1,\cdots, f_{2n-2}$ in (xiv) we take $m:=k$ then we have:

  \item[(xvii)] Let  $n\geq 4$ and $, q, j, k,l\in \{1,\cdots n\},$ 
\begin{equation}\label{eq44} x_{f_1,\cdots,f_{2n-2}}\cdot (1\otimes v_{\lambda})=(-1)^{(j+k+q+\delta_{j,q})} (1\otimes E_{l,j}v_{\lambda}-1\otimes E_{k,j}E_{l,k}v_{\lambda}).\end{equation}
\end{itemize}
 Equations (\ref{case1.2}) and (\ref{case2.5}) don't give us new information.

 \pagebreak

\noindent\underline{Case (3)}

\

Don't give us new equations.

\

Observe that  the right hand side of all the equations (\ref{eq28}) to (\ref{eq44}) belongs to $1\otimes_{U((S_{n})_{\geq 0})},$ therefore they are trivial singular vectors. Due to Lemma \ref{lem4} and the fact that $\hbox{ Sing}_+(M(F))$  does not contain trivial singular vectors , we need to ensure that all the equations (\ref{eq28}) to (\ref{eq44}) are equal to zero.
Since different equations hold for $n=3$ and $n\geq 4$ ,  we will study theses cases separately.

If $n=3,$ equations (\ref{eq29}), (\ref{eq30}), (\ref{eq37}) and ($\ref{eq43}$) hold and they have to be zero. Note that equation (\ref{eq30}) is equivalent to
\begin{equation}\label{eq55}-(\lambda_1+\lambda_2)(1-\lambda_2)(1\otimes v_{\lambda})=0,\end{equation}
\begin{equation}\label{eq56}\lambda_1(1+\lambda_2)(1\otimes v_{\lambda})=0,
\end{equation}
\begin{equation}\label{eq57}\lambda_1(1+\lambda_1+\lambda_2)(1\otimes v_{\lambda})=0.
\end{equation}

 Thus equations (\ref{eq55}) to (\ref{eq57}) implies $\lambda_1=\lambda_2=0$ or $\lambda_1=0, \, \lambda_2=1.$ 
%

\

Now, if $n\geq 4,$ equations (\ref{eq30}) and ($\ref{eq32}$) equate to zero implies that $\lambda_1=\lambda_2=\cdots=\lambda_{n-1}=0$ or $\lambda_1=\lambda_2=\cdots=\lambda_{n-2}=0$ and $\lambda_{n-1}=1.$

 Then we will apply the Freudenthal's formula to calculate the dimensions of the weight spaces and check wether the remaining equations are satisfied.

 We will need the following notation to apply Freudental's formula to $\frak{sl}_n(\mathbb{F})$ (cf. Section 22.3 in \cite{H}):

Let $\frak h$ be our chosen  Cartan subalgebra of $\frak{sl}_{n}(\mathbb{F})$ and  $\epsilon_j$ be defined by $\epsilon_{j}\left(\sum\limits_{i=1}^{n}a_i E_{i,i}\right)=a_j.$
We will consider  the roots $$\phi=\{\epsilon_i-\epsilon_{j}\mid 1\leq i\neq j\leq n\}$$
where the  root space  associated to $(\epsilon_i-\epsilon_{j})$ is generated by $E_{i,j}$ and simple roots are  $$\Delta=\{\epsilon_1-\epsilon_{2}, \epsilon_2-\epsilon_{3},\cdots,\epsilon_{n-1}-\epsilon_{n} \}.$$
Let $\Lambda^+$ be the set of all dominant weights and $\delta = \frac{1}{2}\sum_{\alpha \succ 0}\alpha$.
If $\alpha_{i}:=\epsilon_i-\epsilon_{i+1},$ the fundamental dominant weights relatives to $\Delta$ of $\frak{sl}_{n}(\mathbb{F})$ are given by
\begin{eqnarray}\label{eq4}\pi
_i=&\frac{1}{n}[(n-i)\alpha_1+2(n-i)\alpha_2+\cdots (i-1)(n-i)\alpha_{i-1}\nonumber\\&+i(n-i)\alpha_i
+i(n-1-i)\alpha_{i+1}+\cdots+i\alpha_{n-1}].\end{eqnarray}
Therefore $\Lambda$ is a lattice with basis $\pi_i,\,i=1, \cdots,n-1 .$

Let $n\geq 3$. 
Require that $(\alpha_i, \alpha_i)=1,$ $(\alpha_i,\alpha_j)=-1/2$
if $\mid i-j\mid=1$ and $(\alpha_i,\alpha_j)=0$ if $\mid i-j\mid\geq 2.$

First we will consider $\lambda=(\lambda_1, \cdots, \lambda_{n-1})=(0,\cdots, 0).$ Since $(\lambda+\delta, \lambda+\delta)-(\mu+\delta,\mu+\delta)=0$ for $\mu=-\alpha_{k-1}$  with $k\in \{1, \cdots, n\}$ it follows from Proposition 21.3 and Lemma C of (13.4) in \cite{H}, that $\mu$ is not a weight, therefore multiplicities $\mu=-\alpha_{k-1}$ is equal to zero. Also, it follows from Freudenthal´s formula, that the multiplicities for  $\mu=-\sum_{k=j}^{i-1}\alpha_k$ are also equal to zero for all $i,j \in \{1, \cdots, n\}, \, i>j.$ Thus $E_{i, j}v_{\lambda}=0,$ for all $i>j $. In particular  all the equations from (\ref{eq23}) to (\ref{eq44}) are equal to zero. Observe that if $\lambda=(\lambda_1, \cdots, \lambda_{n-1})=(0, \cdots, 0),$ for $n\geq 3$ then the $\frak{sl}_n(\mathbb{F})$-module $F$ coincides with the exceptional module $F_{0}.$ Due to Theorem \ref{th6} we have to take the quotient of $M(F_{0})$ by the submodule generated by all its non-trivial singular vectors to make the module irreducible.

Finally, if $\lambda=(0,0,\cdots,1),$  the Freudenthal's formula gives that the multiplicities for $\mu=-2(\alpha_{n-2}+\alpha_{n-1})$ are equal to one. This implies that $E_{n, n-2}E_{n, n-2}v_{\lambda}\neq 0,$ therefore equation $(\ref{eq43})$ is non zero and the induced representation $M(F)$ is not a representation of the $n$-Lie algebra $S^n,$ for $n\geq 3$. Conversely, it is straightforward to check that  if $\lambda=(\lambda_1, \cdots, \lambda_{n-1})=(0,\cdots, 0)$, the corresponding irreducible quotient of the induced module is an $S^n$ module, finishing our proof.
\end{proof}

\end{document}